\pgfplotsset{compat=newest}
\pgfplotsset{every tick label/.append style={font=\tiny}}
\pgfplotsset{
	tick label style = {font = {\fontsize{6 pt}{12 pt}\selectfont}},
	label style = {font = {\fontsize{6 pt}{12 pt}\selectfont}},
	legend style = {font = {\fontsize{6 pt}{12 pt}\selectfont}},
}
\newtheorem{thm}{Theorem}
\theoremstyle{plain}
\theoremstyle{remark}
\newtheorem{rem}[thm]{Remark}
\theoremstyle{plain}
\newtheorem{lem}[thm]{Lemma}
\theoremstyle{plain}
\newtheorem{prop}[thm]{Proposition}
\theoremstyle{plain}
\theoremstyle{definition}
\newtheorem{definition}[thm]{Definition}
\newcommand{\R}{\mathds{R}}
\newcommand{\N}{\mathds{N}}
\newcommand{\calI}{\mathcal{I}}
\newcommand{\C}{\mathds{C}}
\newcommand{\T}{T}
\newcommand{\K}{\mathds{K}}
\newcommand{\Z}{\mathds{Z}}
\newcommand{\calM}{\mathcal{M}}
\newcommand{\calK}{\mathcal{K}}
\newcommand{\norm}[1]{\left\lVert#1\right\rVert}
\newcommand{\OmGrid}{\Omega_\#}
\newcommand{\xGrid}{x_\#}
\newcommand{\vGrid}{v_\#}
\renewcommand{\epsilon}{\varepsilon}
\DeclareMathOperator*{\argmin}{arg\,min}
\DeclareMathOperator*{\sgn}{sgn}
\newcommand{\unaryminus}{{\text{\scalebox{0.4}[1.0]{\( - \)}}}}
\newlength\figureheight
\newlength\figurewidth
\newcommand{\InputImage}[3]{}
	\renewcommand{\InputImage}[3]{%
		\setlength\figureheight{#2}%
		\setlength\figurewidth{#1}%
		\input{figures/#3.tikz}%
	}%
	\renewcommand{\InputImage}[3]{%
		\includegraphics[width=#1]{figures/#3}%
	}%
\newcommand{\email}[1]{\protect\href{mailto:#1}{#1}}
\title{Dynamic Spike Super-resolution and Applications to Ultrafast Ultrasound Imaging}
\date{September 19, 2018}
        \author{Giovanni S.\ Alberti\thanks{Department of Mathematics, University of Genoa, Via Dodecaneso 35, 16146 Genova, Italy (\email{alberti@dima.unige.it}).}
\and Habib Ammari\thanks{Department of Mathematics,
ETH Z\"{u}rich, R\"{a}mistrasse 101, 8092 Z\"{u}rich, Switzerland (\email{habib.ammari@math.ethz.ch}, \email{francisco.romero@sam.math.ethz.ch}).}
\and Francisco Romero\footnotemark[2]
 \and Timoth\'ee Wintz\thanks{Sony CSL Paris,
6 rue Amyot, 75005 Paris, France (\email{timothee.wintz@sony.com}).}
}
\begin{document}
	\maketitle
\begin{abstract}
We consider the dynamical super-resolution problem consisting in the recovery of positions and velocities of moving particles from low-frequency static measurements taken over multiple time steps. The standard approach to this issue is a two-step process: first, at each time step some static reconstruction method is applied to locate the positions of the particles with super-resolution and, second, some tracking technique is applied to obtain the velocities. In this paper we propose a fully dynamical method based on a phase-space lifting of the positions and the velocities of the particles, which are simultaneously reconstructed with super-resolution. We provide a rigorous mathematical analysis of the recovery problem, both  for the noiseless case and in presence of noise (in the discrete setting). Several numerical simulations illustrate and validate our  method, which shows some  advantage over existing techniques. 
We then discuss the application of this  approach to the dynamical super-resolution problem in ultrafast ultrasound imaging: blood vessels' locations and blood flow velocities are recovered with super-resolution.
\end{abstract}	

\noindent{\emph{Key words}}. Super-resolution, dynamic spikes,  ultrafast ultrasound imaging,  fluorescence microscopy, blood flow imaging, total variation regularization.
\vspace{2mm}

\noindent\emph{Mathematics Subject Classification}. 65Z05, 42A05, 42A15, 94A08, 94A20, 65J22.
	
	\section{Introduction}
	 It is well-known that the resolution of any wave imaging method is limited by the diffraction limit \cite{ammari2008introduction}. Super-resolution is understood as any technique whose resolution surpasses this fundamental limit, which is of order of half the operating wavelength. More precisely, the super-resolution problem can be stated as follows: given  low frequency measurements of a medium, typically obtained with a convolution by a low pass filter, reconstruct the original medium with a resolution exceeding the diffraction limit. 
 Since high frequencies are completely lost in the measuring process, it is  impossible to solve this problem in the general case.  It is then natural to focus on the particular case when the medium is made of a finite number of point sources, with unknown locations  and intensities.  This framework finds applications in many imaging modalities, from the pioneering work on super-resolved fluorescence microscopy \cite{Hell94,moerner1997,Betzig2006,hess2006ultra,thompson2012extending}  (Nobel Prize in Chemistry 2014 \cite{nobel}) to the works on super-focusing in locally resonant media \cite{ammari2015mathematical, ammari2015super, ammari2017sub, lemoult2013wave, lerosey2007focusing} and the more recent findings in ultrafast  ultrasound localization microscopy \cite{desailly2013sono,errico2015ultrafast}.

In mathematical terms,  each point source is represented as a Dirac delta $w_i \delta_{x_i}$, with unknown locations $x_i\in X\subseteq \R^d$ and intensities $w_i\in\C$.	 We have the sparse spike reconstruction problem: recover 
 \[
	\mu = \sum_{i=1}^N w_i \delta_{x_i}
	\]
	 from the measurements $y=\mathcal{F} \mu$, where \begin{equation} \label{eq:measurOp}
		\mathcal{F} : \mathcal{M}(X) \rightarrow \mathbb{R}^n
	\end{equation}
	is the measurement operator  from  the set of Radon measures  $\mathcal{M}(X)$ defined on $X$.
		 Since $\mathcal{M}(X)$ is infinite dimensional, $\mathcal{F}$ is not injective, and therefore one has to use regularization to recover $\mu$. The common choice in this context is an infinite dimensional variant of the $\ell^1$ minimization:
	\begin{equation}
		  \min_{\nu\in \mathcal{M}(X)} \left\Vert \nu \right\Vert_{\textrm{TV}} \quad\text{subject to $\mathcal{F}\nu = y$.}
		\label{eq:lasso}
	\end{equation}
	Mathematical theory on this problem has greatly advanced over the past years. It includes stable reconstruction of spikes with separation in one and multiple dimensions~\cite{candes2014towards,candes2013}, robust recovery of positive spikes in the case of a Gaussian point spread function with no condition of separation  \cite{bendory2015robust}, exact reconstruction for positive spikes in a general setting \cite{de2012exact} and the corresponding stability properties \cite{denoyelle2016support,poon2017multi}.

Whenever the dynamics of the medium  is relevant, as in the case of blood vessel imaging,
 the super-resolution problem for dynamic point reflectors $\mu_t$ arises. 
  In this case, at each time step one measures $\mathcal{F}\mu_t$ and needs to reconstruct both the locations of the spikes and their dynamics: we call this problem the dynamic spike super-resolution problem (see \Cref{fig:procedure}). The  current  approach for this problem is to perform  a static reconstruction at each time step (using the method discussed above), and then
	to track the spikes to obtain  their velocities \cite{errico2015ultrafast}. This approach suffers from three  main drawbacks: first,  a lot of data are discarded whenever static reconstruction cannot be performed because of particles being too close, second, the  information from neighboring frames is ignored in the first step of the reconstruction and, third,  tracking algorithms are computationally expensive. 
	
	In this paper we propose a new method for this dynamical super-resolution problem based on a fully dynamical inversion scheme, in which  the spikes' locations and velocities are simultaneously reconstructed. After assuming a (local) linear movement of the spikes, we lift the problem to the phase space, in which each spike becomes a particle with location and velocity. The super-resolution problem is then set in this augmented domain, and the minimization performed directly with the full dynamical data. We provide a theoretical investigation of this technique (the analysis shares some common aspects with the one presented in \cite{2017radial} for a similar problem, the main novelty lying in the  geometrical insights), including exact and stable recovery properties, as well as extensive numerical simulations. These simulations show the great potential of this approach for practical applications, far beyond the predictions of the theory, which we believe  can be further developed.

As mentioned above, one of the main motivations and applications of this work is ultrafast  ultrasound localization microscopy \cite{desailly2013sono,errico2015ultrafast}. Ultrafast ultrasonography is a recent imaging modality based on the use of plane waves instead of the usual focused waves \cite{montaldo-tanter-bercoff-benech-finck-2009,62014-tanter-fink,demene2015spatiotemporal}. 
The resolution of ultrafast ultrasound is determined by the wavelength of the incident wave, and by other factors such as the length of the receptor array and the range of angles used in angle compounding~\cite{alberti2017mathematical}. Due to diffraction theory, the highest resolution is half a wavelength, which is of the order of $\unit[300]{\mu m}$. Thus, in blood vessel imaging, blood vessels separated by less than $\unit[300]{\mu m}$  cannot be distinguished. As in fluorescence microscopy, 
randomly activated micro-bubbles in the blood may be used to produce very localized spikes in the observations, giving rise to a dynamical super-resolution problem \cite{errico2015ultrafast}, in which both the locations and the velocities of the bubbles are of interest (the velocities are also used to estimate the thickness of the blood vessels). This is a framework when our approach can be immediately applied.

	This paper is structured as follows. In \Cref{sec:setting} we describe the dynamical super-resolution problem and 
discuss the method introduced in this paper: the phase-space lifting. In \Cref{sec:exact,sec:other} we study the exact recovery issue in absence of noise,
while in \Cref{sec:stability} we prove a stability result for noisy measurements in the discrete setting. In \Cref{sec:numerical} we provide several numerical simulations which 
validate the method and the theoretical results. In \Cref{sec:ultrafast} we discuss the applications of this technique to ultrafast ultrasound localization microscopy. Finally, \Cref{sec:conclusion} contains some concluding remarks and future perspectives.

	\begin{figure}[h]
        \centering        
\captionsetup[subfigure]{width=\textwidth}
\subfloat[Position and velocity of the particles: the dots correspond to the positions of particles in the middle frame ($t=t_0$), and the arrows to the displacement of the particles from the first frame to the last one.]{\makebox[\textwidth]{
                \includegraphics[width=0.25\textwidth]{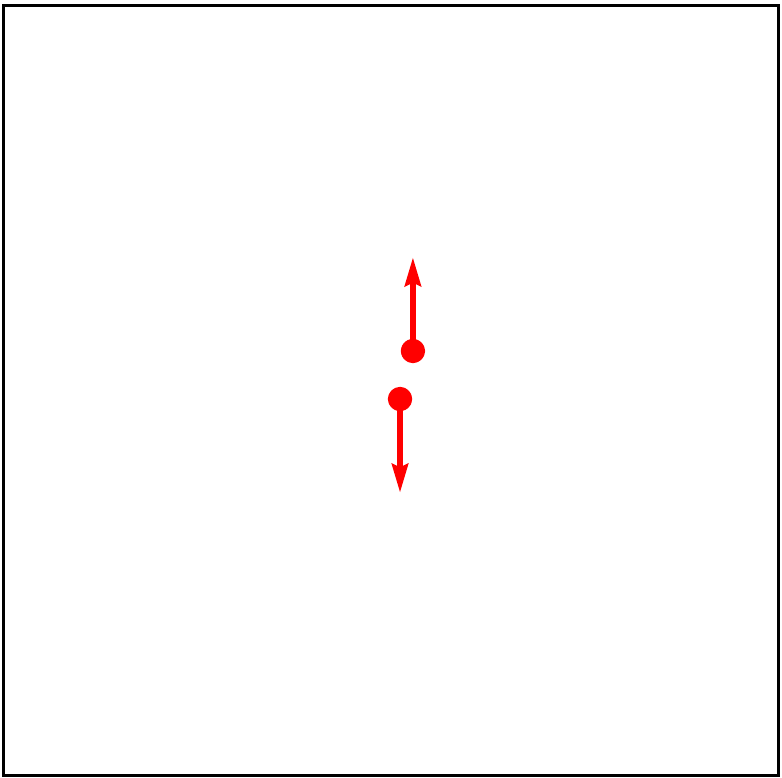}
            }}
            
        \subfloat[Corresponding measured sequence $\{y_k\}_k$.]{
                \includegraphics[width=0.19\textwidth]{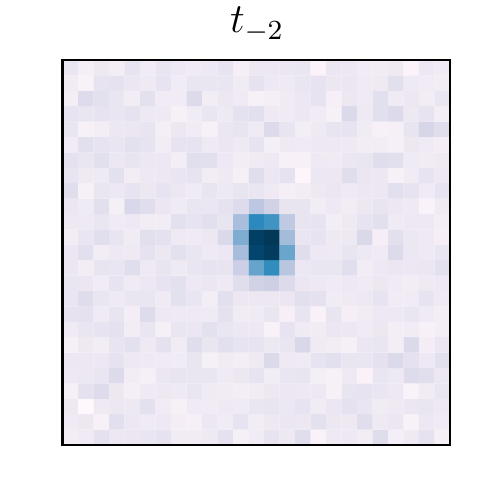}

                \includegraphics[width=0.19\textwidth]{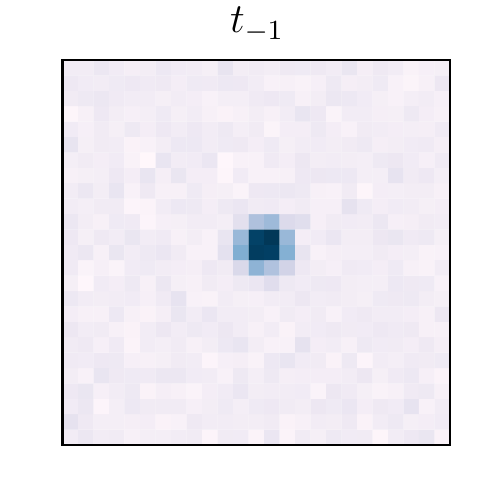}

                \includegraphics[width=0.19\textwidth]{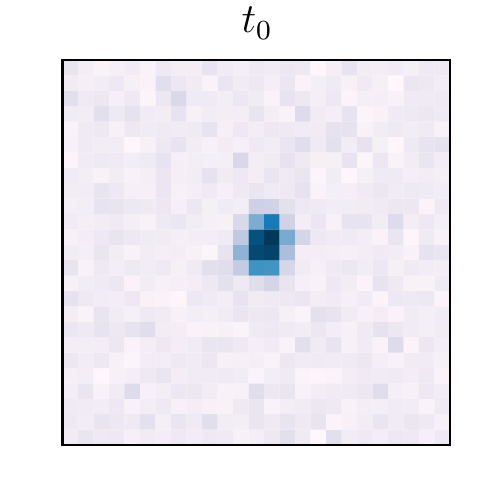}

                \includegraphics[width=0.19\textwidth]{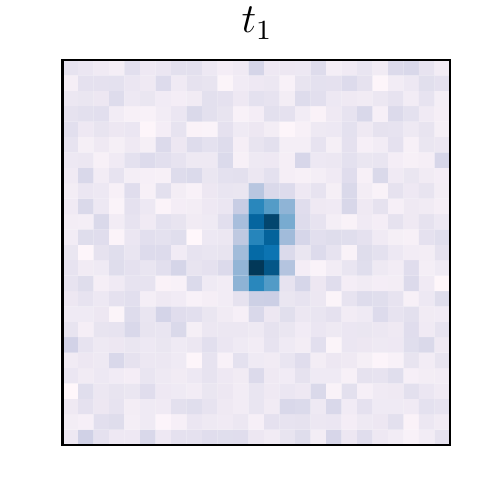}

                \includegraphics[width=0.19\textwidth]{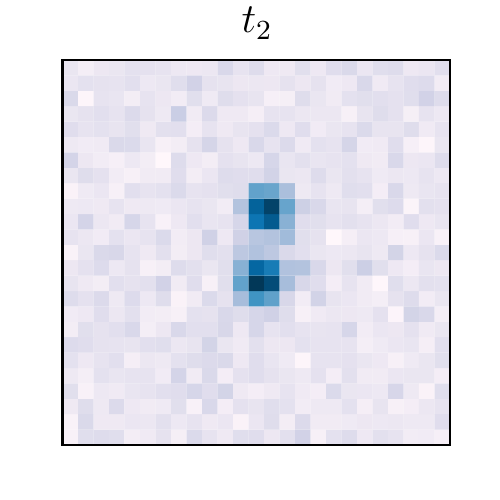}
        }
       
            \caption{Illustration of the dynamic spike super-resolution problem, with parameters $d=2$, $N=2$ and $K=2$ and $\mathcal{F}$ is a convolution operator by a Gaussian point spread function.}
        \label{fig:procedure}
    \end{figure}

    \section{Setting the stage}\label{sec:setting}
    \subsection{The space-velocity model}

Let us now introduce   our model for super-resolution of dynamic spikes. Instead of considering a single measure $\mu$, we consider a time-varying measure $\mu_t$, where $t\in [-\delta, \delta]$, and $\delta>0$ defines our observation window. Since $\delta$ is expected to be small, we can approximate the dynamics of each point linearly. Therefore we model each point source as a particle  displacing  with a constant velocity: 
    \[
        \mu_t = \sum_{i=1}^N w_i \delta_{x_i + v_i t}, \quad t\in [-\delta, \delta],
    \]
    where $v_i \in \mathbb{R}^d$. The measurement vector is then composed of uniform samples in the observation window  $t_k = k \tau$ for  $k \in \{-K,-K+1,\dots,K-1,K\}$, with  $K = \delta/\tau\in\N$ (where  $\N$ denotes the set of all positive integers):
    \[
        y_k = \mathcal{F}\mu_{t_k},\quad k\in [-K, K].
    \]
    Figure~\ref{fig:procedure} illustrates the space-velocity model in two dimensions.

    In this work, we show that under certain conditions, we are able to recover  the positions $x_i$, the velocities $v_i$ and the weights $w_i$ simultaneously with infinite precision, using a sparse spike recovery  method. 

    From now on we will assume that the phase space particles, understood as positions $x_i$ and velocities $v_i$, lie inside the domain
    \begin{equation} \label{eq:omega} \Omega = \left\{ (x,v) \in \R^{2d}: x + k \tau v \in [0,1]^d, \quad \forall k \in [-K,K] \right\}, \end{equation}
        that is the space-velocity domain in which, for all the considered time samples, the locations of the  particles  stay inside $[0,1]^d$. Let $\T = \{(x_i,v_i)\}_{i=1}^N$ denote the set of particles. Furthermore, the set of associated weights $w_i$ will be considered in $\K$, where $\K$ can be either $\C$ or $\R$.

    The measurement operator $\mathcal{F}\colon \mathcal{M}([0,1]^d)\to\C^n$ is  assumed to be of the form
    \[
        \mathcal{F} \nu = \left( \left\langle \nu, \varphi_l \right\rangle \right)_{l=1}^n,\qquad \nu\in \mathcal{M}([0,1]^d),
    \]
    where  $\left\{ \varphi_l \right\}$ is a family of test functions defined on $[0,1]^{d}$ and $\langle \nu, \varphi_l \rangle = \int_{[0,1]^d}\varphi_l\,d\nu$.   Applying $\mathcal{F}$ to the measures $\mu_{k}:=\mu_{t_k}$ for every time step $k\in \left\{-K,\dots,K\right\}$ gives
    \begin{equation*}
        \mathcal{F}\mu_{k}  = \left( \left< \mu_{k}, \varphi_l \right> \right)_{l=1}^{n}  = \left( \sum_{i=1}^{N} w_i \varphi_l (x_i + k \tau v_i ) \right)_{l=1}^{n}.
    \end{equation*}
    By construction, these measurements are composed of a vector of size $n$ for each time sample $k$. We now describe how to express these measurements via an operator defined on measures on the phase space. Consider the  measure $\omega\in\mathcal{M}(\Omega)$ and the   family of  test functions  $\varphi_{l,k}\in C(\Omega)$ given by 
    \begin{equation*} 
        \omega = \sum_{i=1}^N w_i \delta_{x_i, v_i}, \qquad 
        \varphi_{l,k}(x,v) = \varphi_l(x + k \tau v),
    \end{equation*}
    and the measurement operator
    \begin{equation} \label{eq:G}
        \mathcal{G} : \mathcal{M} \left(\Omega \right) \rightarrow \R^{n \times(2K +1)}, \qquad \mathcal{G} \lambda  = \left( \left< \lambda, \varphi_{l,k}\right> \right)_{l,k}.
    \end{equation}
    With these objects we can write $(\mathcal{F}\mu_k)_k = \mathcal{G}\omega$, and so  the measurements  are given by
    \begin{equation} \label{eq:signalModel}
        y = \mathcal{G} \omega.
    \end{equation}

The dynamical reconstruction problem is now set in the phase space, and consists in the recovery of the sparse measure $\omega$ from the measurements \eqref{eq:signalModel}. 
Following the approach for sparse spike recovery, we pose this inversion as a  total variation (TV) optimization problem, in which we seek to reconstruct positions and velocities simultaneously by minimizing
    \begin{equation}\label{eq:lasso_v}
        \min_{\lambda \in \mathcal{M}\left( \Omega \right)} \norm{\lambda}_{\textrm{TV}} \quad\text{subject to }\;   \mathcal{G}\lambda = y,
    \end{equation}
    where the TV norm of a measure $\lambda \in \mathcal{M}\left( \Omega \right)$ is defined as
    \[
    \norm{\lambda}_{\textrm{TV}}:=\sup\left\{
    \int_\Omega f\,d\lambda:f\in C(\Omega),\; \norm{f}_\infty\le 1\right\}.
    \]
We will call \eqref{eq:lasso_v} the \emph{dynamical} recovery, whereas \eqref{eq:lasso} will be called the \emph{static} recovery. The aim of this section is to determine conditions under which exact recovery holds, namely when $\omega$ is the unique minimizer of \eqref{eq:lasso_v}. In this way, the  recovery of the measure $\omega$ from the data $y$ reduces to a convex optimization problem.

    \subsection{The perfect low-pass case} \label{subsec:lowFreq}
    Instead of studying the general framework outlined so far, in order to highlight the main features of this approach we prefer to focus on the particular case of low-frequency Fourier measurements, which represents a simplified model for many different  applications. Thus, the theoretical analysis discussed below refers only to this situation, even though most parts may be extended to the general case of a convolution operator.

    The low-frequency Fourier measurements are expressed by the complex sinusoids $\varphi_l(x)  = e^{- 2\pi i x\cdot l}$ for $l\in\Z^d$ with $\|l\|_\infty\le f_c$, where $f_c \in \N$ is the highest available frequency for the considered imaging system. The static measurements are given by
\[
    (\mathcal{F}\nu)_l = \int_{[0,1]^d} e^{- 2\pi i x\cdot l}\,d\nu(x),\qquad l\in\left\{-f_c,\dots,f_c\right\}^d.
\]	
With dynamical data, for $k\in\{-K,\dots,K\}$, we  have	$\varphi_{l,k}(x,v) = e^{ 2\pi i (x+k \tau v)\cdot l}$ and so
    \begin{equation}\label{eq:GFourier}
        (\mathcal{G}\omega)_{l,k} = \int_{\Omega} e^{- 2\pi i(x+k \tau v)\cdot l }\,d\omega(x,v)
        = \int_{\Omega} e^{- 2\pi i (x,v)\cdot(l,k \tau l) }\,d\omega(x,v).
    \end{equation}
This expression shows that our data consist of low-frequency samples of $2d$-dimensional Fourier measurements restricted to the $d$-dimensional subspaces $\{(\xi,k\tau\xi)\in\R^{2d}:\xi\in\R^d\}$ for $k\in\{-K,\dots,K\}$. Thus, our problem is in principle harder than the one considered in \cite{candes2014towards}, in which one measures all low-frequency Fourier coefficients, and not only
\begin{equation}\label{eq:butterflySet}
    \left\{(l, k\tau l):l\in\Z^d, \norm{l}_{\infty} \leq f_c,\;k=-K,\dots,K \right\},
\end{equation}
and different from the one considered in \cite{2017radial}, in which the Fourier transform is sampled along lines.
    For a visual representation of this restriction in the case  $d = 1$, see \Cref{fig:butterfly}. 

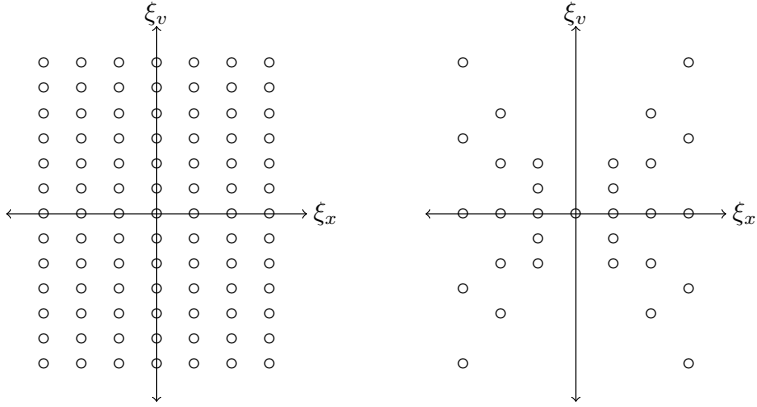
\begin{figure} \centering
    \begin{minipage}{0.45\linewidth}
        \begin{center}
            \begin{tikzpicture}
                \draw[<->] (0,-2.5) -- (0,2.5);
                \draw[<->] (-2,0) -- (2,0);
                \draw (-3/2,-6/3) node {$\circ$}; \draw (-3/2,-5/3) node {$\circ$};
                \draw (-3/2,-4/3) node {$\circ$}; \draw (-3/2,-3/3) node {$\circ$};
                \draw (-3/2,-2/3) node {$\circ$}; \draw (-3/2,-1/3) node {$\circ$};
                \draw (-3/2,-0/3) node {$\circ$}; \draw (-3/2,1/3) node {$\circ$};
                \draw (-3/2,2/3) node {$\circ$}; \draw (-3/2,3/3) node {$\circ$};
                \draw (-3/2,4/3) node {$\circ$}; \draw (-3/2,5/3) node {$\circ$};
                \draw (-3/2, 6/3) node {$\circ$};
                \draw (-2/2,-6/3) node {$\circ$}; \draw (-2/2,-5/3) node {$\circ$};
                \draw (-2/2,-4/3) node {$\circ$}; \draw (-2/2,-3/3) node {$\circ$};
                \draw (-2/2,-2/3) node {$\circ$}; \draw (-2/2,-1/3) node {$\circ$};
                \draw (-2/2,0/3) node {$\circ$}; \draw (-2/2,1/3) node {$\circ$};
                \draw (-2/2,2/3) node {$\circ$}; \draw (-2/2,3/3) node {$\circ$};
                \draw (-2/2,4/3) node {$\circ$}; \draw (-2/2,5/3) node {$\circ$};
                \draw (-2/2,6/3) node {$\circ$};
                \draw (-1/2,-6/3) node {$\circ$}; \draw (-1/2,-5/3) node {$\circ$};
                \draw (-1/2,-4/3) node {$\circ$}; \draw (-1/2,-3/3) node {$\circ$};
                \draw (-1/2,-2/3) node {$\circ$}; \draw (-1/2,-1/3) node {$\circ$};
                \draw (-1/2,-0/3) node {$\circ$}; \draw (-1/2,1/3) node {$\circ$};
                \draw (-1/2,2/3) node {$\circ$}; \draw (-1/2,3/3) node {$\circ$};
                \draw (-1/2,4/3) node {$\circ$}; \draw (-1/2,5/3) node {$\circ$};
                \draw (-1/2,6/3) node {$\circ$};
                \draw (0,-6/3) node {$\circ$}; \draw (0,-5/3) node {$\circ$};
                \draw (0,-4/3) node {$\circ$}; \draw (0,-3/3) node {$\circ$};
                \draw (0,-2/3) node {$\circ$}; \draw (0,-1/3) node {$\circ$};
                \draw (0,0/3) node {$\circ$}; \draw (0,1/3) node {$\circ$};
                \draw (0,2/3) node {$\circ$}; \draw (0,3/3) node {$\circ$};
                \draw (0,4/3) node {$\circ$}; \draw (0,5/3) node {$\circ$};
                \draw (0,6/3) node {$\circ$};
                \draw (1/2,-6/3) node {$\circ$}; \draw (1/2,-5/3) node {$\circ$};
                \draw (1/2,-4/3) node {$\circ$}; \draw (1/2,-3/3) node {$\circ$};
                \draw (1/2,-2/3) node {$\circ$}; \draw (1/2,-1/3) node {$\circ$};
                \draw (1/2,-0/3) node {$\circ$}; \draw (1/2,1/3) node {$\circ$};
                \draw (1/2,2/3) node {$\circ$}; \draw (1/2,3/3) node {$\circ$};
                \draw (1/2,4/3) node {$\circ$}; \draw (1/2,5/3) node {$\circ$};
                \draw (1/2,6/3) node {$\circ$};
                \draw (2/2,-6/3) node {$\circ$}; \draw (2/2,-5/3) node {$\circ$};
                \draw (2/2,-4/3) node {$\circ$}; \draw (2/2,-3/3) node {$\circ$};
                \draw (2/2,-2/3) node {$\circ$}; \draw (2/2,-1/3) node {$\circ$};
                \draw (2/2,0/3) node {$\circ$}; \draw (2/2,1/3) node {$\circ$};
                \draw (2/2,2/3) node {$\circ$}; \draw (2/2,3/3) node {$\circ$};
                \draw (2/2,4/3) node {$\circ$}; \draw (2/2,5/3) node {$\circ$};
                \draw (2/2,6/3) node {$\circ$};
                \draw (3/2,-6/3) node {$\circ$}; \draw (3/2,-5/3) node {$\circ$};
                \draw (3/2,-4/3) node {$\circ$}; \draw (3/2,-3/3) node {$\circ$};
                \draw (3/2,-2/3) node {$\circ$}; \draw (3/2,-1/3) node {$\circ$};
                \draw (3/2,-0/3) node {$\circ$}; \draw (3/2,1/3) node {$\circ$};
                \draw (3/2,2/3) node {$\circ$}; \draw (3/2,3/3) node {$\circ$};
                \draw (3/2,4/3) node {$\circ$}; \draw (3/2,5/3) node {$\circ$};
                \draw (3/2, 6/3) node {$\circ$};
                \draw (0,8/3) node {$\xi_v$};
                \draw (4.5/2,0) node {$\xi_x$};
            \end{tikzpicture}
        \end{center}
    \end{minipage}
        \begin{minipage}{0.45\linewidth}
            \begin{center}
                \begin{tikzpicture}
                    \draw[<->] (0,-2.5) -- (0,2.5);
                    \draw[<->] (-2,0) -- (2,0);
                    \draw (0,0) node {$\circ$};
                    \draw (1/2,0) node {$\circ$};
                    \draw (1/2,1/3) node {$\circ$};
                    \draw (1/2,2/3) node {$\circ$};
                    \draw (1/2,-1/3) node {$\circ$};
                    \draw (1/2,-2/3) node {$\circ$};
                    \draw (-1/2,0) node {$\circ$};
                    \draw (-1/2,1/3) node {$\circ$};
                    \draw (-1/2,2/3) node {$\circ$};
                    \draw (-1/2,-1/3) node {$\circ$};
                    \draw (-1/2,-2/3) node {$\circ$};
                    \draw (2/2,0) node {$\circ$};
                    \draw (2/2,2/3) node {$\circ$};
                    \draw (2/2,4/3) node {$\circ$};
                    \draw (2/2,-2/3) node {$\circ$};
                    \draw (2/2,-4/3) node {$\circ$};
                    \draw (-2/2,0) node {$\circ$};
                    \draw (-2/2,2/3) node {$\circ$};
                    \draw (-2/2,4/3) node {$\circ$};
                    \draw (-2/2,-2/3) node {$\circ$};
                    \draw (-2/2,-4/3) node {$\circ$};
                    \draw (3/2,0) node {$\circ$};
                    \draw (3/2,3/3) node {$\circ$};
                    \draw (3/2,6/3) node {$\circ$};
                    \draw (3/2,-1) node {$\circ$};
                    \draw (3/2,-2) node {$\circ$};
                    \draw (-3/2,0) node {$\circ$};
                    \draw (-3/2,1) node {$\circ$};
                    \draw (-3/2,2) node {$\circ$};
                    \draw (-3/2,-1) node {$\circ$};
                    \draw (-3/2,-2) node {$\circ$};
                    \draw (0,8/3) node {$\xi_v$};
                    \draw (4.5/2,0) node {$\xi_x$};
                \end{tikzpicture}
            \end{center}
        \end{minipage}
        \caption{The allowed frequencies for the full low-frequency case (left) and in our case, when $d=1$, $f_c = 3$ and $K = 2$.}
        \label{fig:butterfly}
\end{figure}

    \section{Exact recovery in absence of noise}\label{sec:exact}

    \subsection{Dual certificates}\label{sub:dual}

As it is standard in convex optimization, it is useful to  consider the dual problem to study the exact recovery for  \eqref{eq:lasso_v}.
In order to do this, we need to introduce the concept of dual certificate. We use the notation $\sgn \K^N = \{\eta\in\K^N:|\eta_i|=1\text{ for every $i=1,\dots,N$}\}$.

    \begin{definition}[Dual certificate]
        \label{def:dualCertificate}  Let $T=\{(x_i,v_i)\}_{i=1,\dots,N}\subseteq\Omega$ be a configuration of particles  and  $\eta \in \sgn\K^{N}$. A \emph{dual certificate} of the dynamical recovery problem \eqref{eq:lasso_v} is a function  
        \begin{equation}
            \label{eq:form}
            q(x, v) = \sum_{k=-K}^{K} \ \sum_{\norm{l}_\infty \leq f_c} c_{k,l} e^{i 2\pi l \cdot \left( x + k\tau v \right)},
        \end{equation}
        where $c_{k,l} \in \C$,		 obeying
        \begin{equation}
            \label{eq:cond}
            \left\{\begin{array}{ll} q(x_i, v_i) =
                \eta_i, & i\in\{1,\dots,N\}, \\
                |q(x, v)| < 1, & (x,v)\in \Omega \setminus \T.
            \end{array}\right.
        \end{equation}
        In the following, we shall say that a function of the form  \eqref{eq:form} has \emph{dynamical form}.
    \end{definition}

    In \Cref{fig:healthyDualcertificate}, we present an example of a dual certificate for the dynamical recovery problem, which was computed using a predefined kernel as in \cite{candes2014towards}. 

    The existence of a dual certificate guarantees exact recovery for the minimization problem \eqref{eq:lasso_v}. More precisely, we have the following result (for a proof, see \cite[Proposition A.1]{candes2014towards}).

    \begin{figure}
        \centering
        \includegraphics[width=0.5\textwidth]{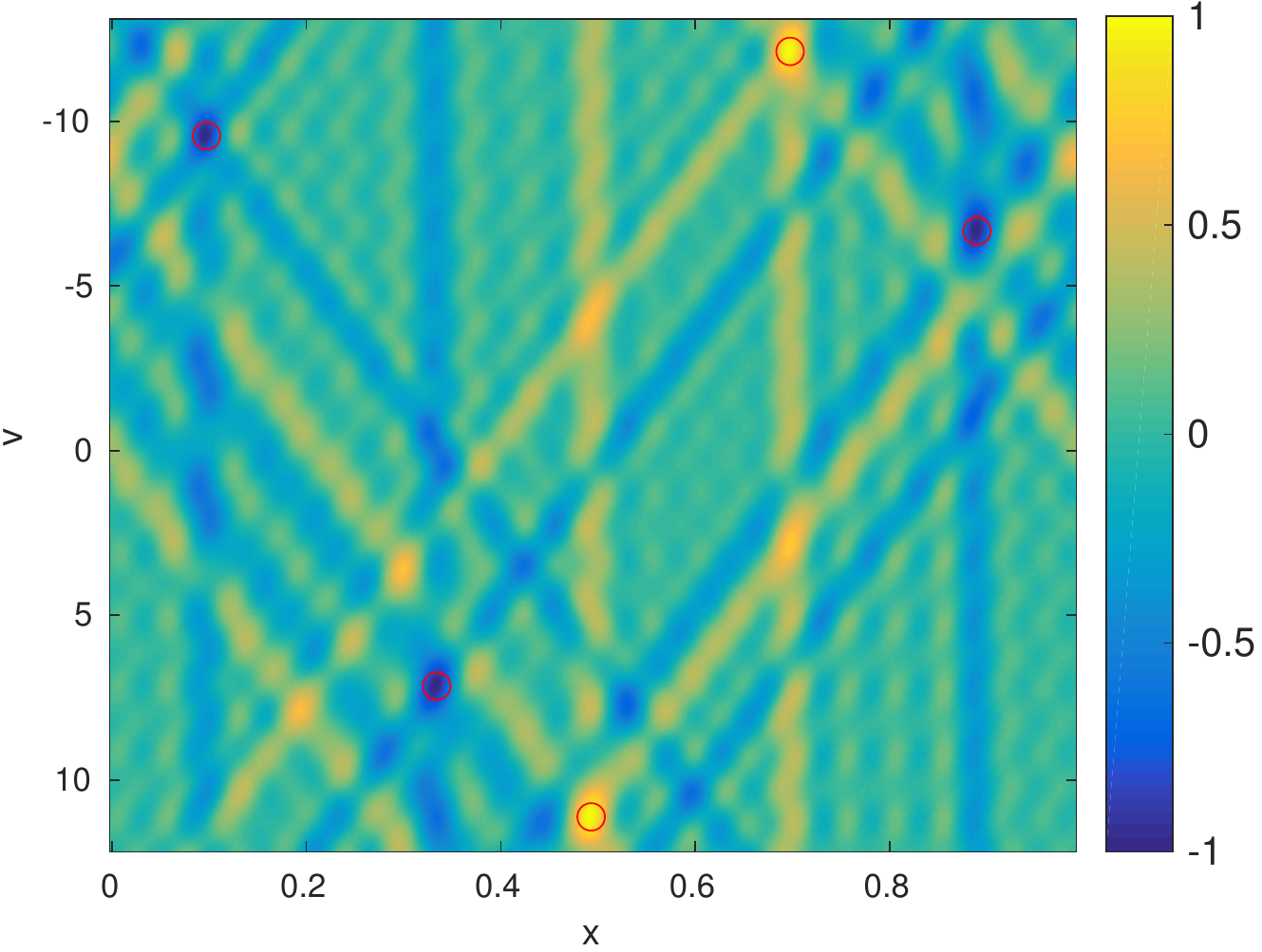}
        \caption{ Example in $d=1$ of a dual certificate for $N=5$ particles (inside the red circles) with real weights. The parameters are $K = 1$, $f_c = 20$ and $\tau = 0.025$. The positions, speeds and weights were selected at random. }
        \label{fig:healthyDualcertificate}
    \end{figure}

    \begin{prop} \label{prop:dual} Suppose that for every $\eta \in \sgn\K^N$ there exists a dual certificate for the dynamical recovery problem, and let $\hat\omega$ be a minimizer of \eqref{eq:lasso_v}. Then $\hat\omega=\omega$.
    \end{prop}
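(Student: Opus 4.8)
The plan is to run the standard convex-duality argument for total-variation minimization, i.e.\ \cite[Proposition A.1]{candes2014towards} transported to the phase-space operator $\mathcal{G}$ of \eqref{eq:G}. Write $\omega=\sum_{i=1}^N w_i\delta_{(x_i,v_i)}$ and assume (by discarding indices) that every $w_i\neq 0$. Fix the sign pattern $\eta\in\sgn\K^N$ determined by $w_i\eta_i=|w_i|$ for all $i$, and let $q$ be a dual certificate for $T$ and this $\eta$, which exists by hypothesis. The structural fact to isolate first is that, since $q$ has dynamical form \eqref{eq:form}, it is a linear combination of the test functions $\varphi_{l,k}$ (the exponentials $e^{i2\pi l\cdot(x+k\tau v)}$ in \eqref{eq:form} are the $\varphi_{l,k}$ up to the relabelling $l\mapsto -l$, which preserves $\{\norm{l}_\infty\le f_c\}$); hence $\lambda\mapsto\int_\Omega q\,d\lambda$ is a fixed linear functional of $\mathcal{G}\lambda$.

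Now $\omega$ is feasible for \eqref{eq:lasso_v} since $\mathcal{G}\omega=y$ by \eqref{eq:signalModel}, so the minimizer $\hat\omega$ obeys $\mathcal{G}\hat\omega=y=\mathcal{G}\omega$ and $\norm{\hat\omega}_{\textrm{TV}}\le\norm{\omega}_{\textrm{TV}}$. Combining the structural fact with the interpolation conditions \eqref{eq:cond} gives
\[
\norm{\omega}_{\textrm{TV}}=\sum_{i=1}^N|w_i|=\sum_{i=1}^N w_i q(x_i,v_i)=\int_\Omega q\,d\omega=\int_\Omega q\,d\hat\omega\le\int_\Omega|q|\,d|\hat\omega|\le\norm{\hat\omega}_{\textrm{TV}},
\]
where the last two inequalities use $|q|\le 1$ on $\Omega$ (again by \eqref{eq:cond}). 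Together with $\norm{\hat\omega}_{\textrm{TV}}\le\norm{\omega}_{\textrm{TV}}$ this forces equality throughout, in particular $\int_\Omega(1-|q|)\,d|\hat\omega|=0$. Since $1-|q|\ge 0$ on $\Omega$ and $1-|q|>0$ on $\Omega\setminus T$ by \eqref{eq:cond}, we conclude $|\hat\omega|(\Omega\setminus T)=0$, i.e.\ $\hat\omega=\sum_{i=1}^N\hat w_i\delta_{(x_i,v_i)}$ is supported on $T$.

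It remains to match the weights. The measure $\nu:=\hat\omega-\omega=\sum_{i=1}^N(\hat w_i-w_i)\delta_{(x_i,v_i)}$ lies in $\ker\mathcal{G}$. Assume $\nu\neq 0$, choose $\tilde\eta\in\sgn\K^N$ with $(\hat w_i-w_i)\tilde\eta_i=|\hat w_i-w_i|$ for every $i$, and let $\tilde q$ be a dual certificate for $(T,\tilde\eta)$; applying the structural fact again, $\int_\Omega\tilde q\,d\nu$ is a linear functional of $\mathcal{G}\nu=0$ and so vanishes, while \eqref{eq:cond} forces $\int_\Omega\tilde q\,d\nu=\sum_i(\hat w_i-w_i)\tilde\eta_i=\sum_i|\hat w_i-w_i|>0$ --- a contradiction. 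Hence $\nu=0$ and $\hat\omega=\omega$. The duality inequalities above are routine; the two places that need genuine care are the implication ``$\int_\Omega(1-|q|)\,d|\hat\omega|=0\Rightarrow\operatorname{supp}\hat\omega\subseteq T$'', which relies on the \emph{strict} bound $|q|<1$ off $T$, and this last paragraph, where injectivity of $\mathcal{G}$ on measures supported on $T$ is \emph{not} assumed but extracted from a further certificate for the (a priori unknown) sign pattern of $\hat\omega-\omega$ --- this is the step that uses the hypothesis in its full ``for every $\eta$'' strength.
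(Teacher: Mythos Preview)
Your argument is correct and is precisely the one the paper defers to: the paper gives no proof of its own but cites \cite[Proposition~A.1]{candes2014towards}, whose proof is exactly the two-certificate duality argument you wrote (first use the certificate for $\eta=\sgn w$ to force $\operatorname{supp}\hat\omega\subseteq T$ via equality in $\norm{\omega}_{\textrm{TV}}\le\norm{\hat\omega}_{\textrm{TV}}$, then use a second certificate for $\tilde\eta=\sgn(\hat w-w)$ to kill the remaining discrepancy). Your observation that $q$ having dynamical form makes $\lambda\mapsto\int q\,d\lambda$ a function of $\mathcal{G}\lambda$ is the only adaptation needed to transport that proof to the present setting, and you handle it correctly.
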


 With this proposition in hand, our problem reduces to finding conditions under which  dual certificates exist. As mentioned above, the static recovery problem was treated in \cite{candes2014towards}, but their methodology cannot be transferred directly to our case since  the static dual certificates are constructed with all low frequency coefficients, whereas in our case we have access only to the frequencies given by the set \eqref{eq:butterflySet}.

    The particular structure of functions with dynamical form \eqref{eq:form} allows for a simple decomposition 	
    \begin{equation*}
        q(x,v) = \frac{1}{|\calK|}\sum_{k\in\calK} q_k(x,v), \qquad q_k(x,v) =\sum_{\norm{l}_\infty \leq f_c} c_{k,l} e^{i 2\pi l \cdot \left( x + k\tau v \right)},
    \end{equation*}
    where $\calK\subseteq\{-K,\dots,K\}$ is a subset of the time samples that is used to construct the dual certificate.
    Observe that the functions $q_k$ are constant along the $d$-dimensional subspaces parallel to $\{ (x,v)\in\R^{2d} : x+k \tau v = 0 \}$. Thus, in principle they can be seen as functions of $[0,1]^d \subseteq \R^d$ instead of  $\Omega \subseteq \R^{2d}$. More precisely, we write
    \begin{equation*}
        q_k(x,v) = \tilde q_k(x+k \tau v),\qquad \tilde q_k(y) = \sum_{\norm{l}_\infty \leq f_c } c_{k,l} e^{i2\pi l \cdot y }.
    \end{equation*}
    Consider the values of these functions on the location of the particles at each time
    \begin{equation} \label{eq:gamma}
        \gamma_{i,k} := q_k(x_i,v_i) = \tilde q_k(x_i + k \tau v_i), \quad i \in \{1,\ldots,N\},\ k \in \{-K,\ldots,K\}.
    \end{equation}
    The functions $q_k(x,v)$ are constant along the affine spaces
    \begin{equation*} 
        L_{i,k} := \{ (x,v) \in \Omega: (x-x_i) + k \tau (v- v_i) = 0 \},
    \end{equation*}
    which contain the particle $i$. This implies that  the constants $\gamma_{i,k}$ propagate along them, namely
    \[
        q_k(x,v)=\gamma_{i,k},\qquad (x,v)\in L_{i,k}.
    \]
    In other words, the values  of the dual certificate on $L_{i,k}$ are completely determined by $\gamma_{i,k}$. 	Moreover, by \eqref{eq:cond}, these values must satisfy the conditions
        \begin{equation}\label{prop:gammas}
            \frac{1}{|\calK|}\sum_{k\in\calK} \gamma_{i,k} = \eta_i, \qquad  i\in \{1,\ldots,N\}.
        \end{equation}
 In \Cref{fig:ghost} we present an example in $d=1$, with three static particles ($N=3$) and three time measurements ($K=1$). The values of $q$ are fixed by $\gamma_{i,k}$ on each line $L_{i,k}$, and in particular in their points of intersection. As we shall see below, the problematic points are those where several lines (or $d$-dimensional affine subspaces) intersect, as in the two circled dots in the figure.

    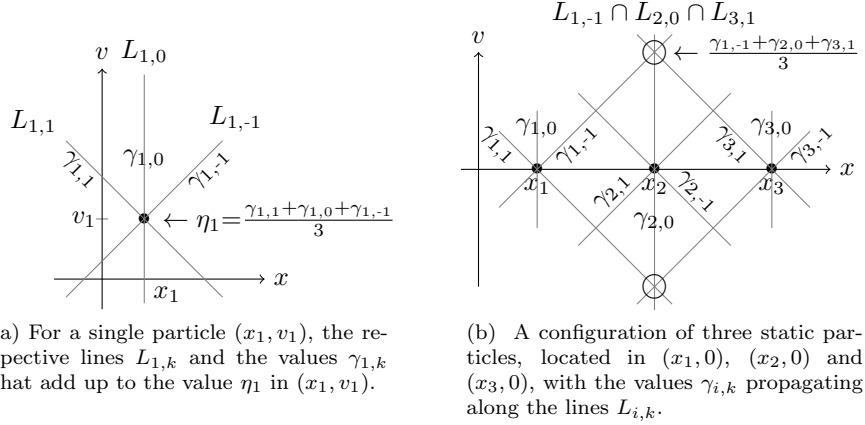
\begin{figure}
        \newcommand{\varDist}{2}
        \newcommand{\varSlope}{1}
        \newcommand{\varLinelength}{2.3}
        \newcommand{\varLinelengthShort}{1.3}
        \newcommand{\varXone}{1}
        \newcommand{\varXtwo}{\varXone+\varDist}
        \newcommand{\varXthree}{\varXone + 2*\varDist}
        \centering\captionsetup[subfigure]{width=150pt}
         \subfloat[For a single particle $(x_1,v_1)$, the respective lines $L_{1,k}$  and the values $\gamma_{1,k}$ that add up to the value $\eta_1$ in $(x_1,v_1)$.]{\label{fig:ghosta}
                \begin{tikzpicture}[scale = 0.8]
                    \draw (1,1) node[right] {$\ \leftarrow \eta_1 {=} \frac{\gamma_{1,1} {+} \gamma_{1,0} {+} \gamma_{1,\unaryminus 1}}{3}$} node {$\bullet$};
                    \draw (1+1.1,1+0.8) node [rotate=45] {$\gamma_{1,\unaryminus 1}$};
                    \draw (1-1.1,1+0.8) node [rotate=-45] {$\gamma_{1,1}$};
                    \draw (1,2) node {$\gamma_{1,0}$};
                    \draw (1,-0.25) node[right] {$x_1$};
                    \draw (-0,1) node {$v_1$};
                    \draw[-,gray] (0.2,1) -- (0.4,1);
                    \draw[->] (-0.5,0) -- (3,0);
                    \draw[->] (0.3,-0.5) -- (0.3, 3.5);
                    \draw (3,0) node[right] {$x$};
                    \draw (0.3,3.5) node[above] {$v$};
                    \draw[-,gray] (\varXone -\varLinelengthShort,\varLinelengthShort/\varSlope + 1) -- (\varXone +\varLinelengthShort, -\varLinelengthShort/\varSlope + 1);
                    \draw (\varXone -\varLinelengthShort,\varLinelengthShort/\varSlope + 1 + 0.3) node[left] {$ \quad L_{1,1}$};
                    \draw[-,gray] (\varXone -\varLinelengthShort,-\varLinelengthShort/\varSlope + 1) -- (\varXone +\varLinelengthShort, \varLinelengthShort/\varSlope + 1);
                    \draw (\varXone +\varLinelengthShort, \varLinelengthShort/\varSlope + 1) node[above] {$\quad L_{1,\unaryminus 1}$};
                    \draw[-,gray] (\varXone,-\varSlope*\varDist*0.7 + 1)--(\varXone, \varSlope*\varDist*1.2 + 1);
                    \draw (\varXone, \varSlope*\varDist*1.2 + 1) node[above] {$  L_{1,0}$};
                \end{tikzpicture}
            }
            \hfill
         \subfloat[ A configuration of three static particles, located in $(x_1,0)$, $(x_2,0)$ and $(x_3,0)$, with the values $\gamma_{i,k}$ propagating along the lines $L_{i,k}$.]{\label{fig:ghostb}
                \begin{tikzpicture}[scale = 0.78]
                    \draw[->] (-0.3,0) -- (2+2*\varDist,0);
                    \draw (2+2*\varDist,0) node[right] {$x$};
                    \draw [->] (0,-2) -- (0,2);
                    \draw (0,2) node[above] {$v$};
                    \draw (\varXone,0) node[below] {$x_1$} node {$\bullet$};
                    \draw (\varXtwo,0) node[below] {$x_2$} node{$\bullet$};
                    \draw (\varXthree,0) node[below] {$x_3$} node{$\bullet$};
                    \draw[-,gray] (\varXone -\varLinelengthShort/2,\varLinelengthShort/2/\varSlope) -- (\varXone +\varLinelength, -\varLinelength/\varSlope);
                    \draw[-,gray] (\varXone -\varLinelengthShort/2,-\varLinelengthShort/2/\varSlope) -- (\varXone +\varLinelength, \varLinelength/\varSlope);
                    \draw[-,gray] (\varXone,-\varSlope*\varDist*0.5)--(\varXone, \varSlope*\varDist*0.5);
                    \draw[-,gray] (\varXtwo -\varLinelengthShort,\varLinelengthShort/\varSlope) -- (\varXtwo +\varLinelengthShort, -\varLinelengthShort/\varSlope);
                    \draw[-,gray] (\varXtwo -\varLinelengthShort,-\varLinelengthShort/\varSlope) -- (\varXtwo +\varLinelengthShort, \varLinelengthShort/\varSlope);
                    \draw[-,gray] (\varXtwo,-\varSlope*\varDist*1.2)--(\varXtwo, \varSlope*\varDist*1.15);
                    \draw[-,gray] (\varXthree -\varLinelength,\varLinelength/\varSlope) -- (\varXthree +\varLinelengthShort/2, -\varLinelengthShort/2/\varSlope);
                    \draw[-,gray] (\varXthree -\varLinelength,-\varLinelength/\varSlope) -- (\varXthree +\varLinelengthShort/2, \varLinelengthShort/2/\varSlope);
                    \draw[-,gray] (\varXthree,-\varSlope*\varDist*0.5)--(\varXthree, \varSlope*\varDist*0.5);
                    \draw (\varXtwo, \varSlope*\varDist) node[above=0.2] {$L_{1,\unaryminus 1}\cap L_{2,0} \cap L_{3,1} $} node {$\bigcirc$} node[right] {$\ \leftarrow \frac{\gamma_{1,\unaryminus 1}{+}\gamma_{2,0}{+}\gamma_{3,1}}{3}$};
                    \draw (\varXtwo, -\varSlope*\varDist) node {$\bigcirc$};
                    \draw (\varXone,0.7) node {$\gamma_{1,0}$};
                    \draw (\varXone + 0.7, 0.4) node [rotate = 45] {$\gamma_{1,\unaryminus 1}$};
                    \draw (\varXone - 0.7, 0.4) node [rotate = -45] {$\gamma_{1,1}$};
                    \draw (\varXthree,0.7) node {$\gamma_{3,0}$};
                    \draw (\varXthree + 0.7, 0.4) node [rotate = 45] {$\gamma_{3,\unaryminus 1}$};
                    \draw (\varXthree - 0.7, 0.4) node [rotate = -45] {$\gamma_{3,1}$};
                    \draw (\varXtwo, -0.9) node {$\gamma_{2,0}$};
                    \draw (\varXtwo + 0.7, -0.4) node [rotate = -45] {$\gamma_{2,\unaryminus 1}$};
                    \draw (\varXtwo - 0.7, -0.4) node [rotate = 45] {$\gamma_{2,1}$};
                    %			\draw (0,1) node[right] {\tiny \ Ghost point} node{$\bigcirc$};
                    %			\draw[thick] (2,-1) -- (-1, 2);
                    %			\draw[thick] (0,-2) -- (0, 2);
                    %			\draw[thick] (-2,-1) -- (1, 2);
                \end{tikzpicture}
            }
        \caption{The geometries of the problem for some simple configurations in one dimension ($d=1$) with three  time measurements ($K=1$) and $\calK=\{-1,0,1\}$}.
        \label{fig:ghost}
    \end{figure}
    A natural way to build dual certificates for the dynamical problem is to consider dual certificates for the static problems at each time step in $\calK$ and then to average them. In particular, we make the choice $\gamma_{i,k}=\eta_i$ for every $i$ and $k$. More precisely, we have the following definition.

    \begin{definition}[Static average certificate]
        \label{def:staticAverage}  Take $T=\{(x_i,v_i)\}_{i=1,\dots N}\subseteq\Omega$. Let $\eta \in \sgn\K^N$  and $\calK \subseteq \{-K, \ldots, K\}$ with $|\calK| \geq 3$. Assume  that for every $k \in \calK$ there exists a static dual certificate, i.e.\  there exists $\tilde q_k(x) =\sum_{\norm{l}_\infty \leq f_c} c_{k,l} e^{i 2\pi l \cdot x}$ such that
        \begin{subequations}\label{eq:average}
            \begin{equation} \label{eq:averagea}
                \left\{ \begin{array}{ll} \tilde q_k(x_i + k \tau v_i) = \eta_i, & \quad   i \in \{1,\ldots,N\}, \\
                |\tilde q_k(y)| < 1, & \quad y \in [0,1]^d \setminus \{(x_i+k\tau v_i)\}_{i=1}^N. \end{array} \right.
            \end{equation}
            We call  the function $q(x,v)$ defined as
            \begin{equation}\label{eq:averageb}
                q(x,v) = \frac{1}{|\calK|} \sum_{k \in \calK} q_k(x,v) = \frac{1}{|\calK |} \sum_{k \in \calK} \tilde q_k(x + k \tau  v)
            \end{equation}
            a \emph{$\calK$-static average certificate}.
        \end{subequations}
    \end{definition}

If static dual certificates exist for every time sample $k\in\calK$, we can immediately build a  static average certificate $q(x,v)$ by using \eqref{eq:averageb}. By construction, it satisfies
    \begin{equation} \label{eq:staavebou}
        \left\{ \begin{array}{ll} q(x_i,v_i) = \eta_i, & \quad  i \in \{1,\ldots,N\}, \\ |q(x,v)| \leq 1, & \quad (x,v)\in\Omega \setminus \T. \end{array} \right. 
    \end{equation}
        This function almost satisfies \eqref{eq:cond}, except that it may happen that $|q(x,v)| = 1$ for some $(x,v)\in\Omega \setminus \T$.
     Take as example the configuration of points given in  \Cref{fig:ghostb} with $\eta_1= \eta_2 = \eta_3 = 1$ and $\calK=\{-1,0,1\}$. The static average certificate will value $1$ in each of the particles and by construction $\gamma_{i,k} = 1$. As a consequence, $q$ will have value $1$ also in the circled points, in which $|\calK|=3$  lines $L_{i,k}$ intersect, hence breaking condition \eqref{eq:cond}.

    In order to ensure that the configuration of particles $\{(x_i,v_i)\}_i$ admits a dual certificate, we characterize these conflictive points.

    \begin{definition}[Ghost particles] Let $\{(x_i, v_i)\}_{i=1,\dots,N}\subseteq\Omega$ be a configuration of particles and  $\calK = \{k_1,\ldots,k_m\}$ be $m = |\calK|$ time samples. A point $(g,w) \in \Omega$ is a \emph{ghost particle} if there exists a set of different indexes $i_1,\ldots, i_m \in \{ 1,\ldots,N \}$ such that
        \begin{equation*}
            \bigcap_{p=1}^{m} L_{i_p,k_p} = \{ (g,w) \}.
        \end{equation*}
    \end{definition}
     To give the intuition behind the definition of ghost particles, recall that the elements of $\Omega$ represent the trajectories of moving objects in $[0,1]^d$: given a time $k$, a particle  $(x,v)\in\Omega$ describes an object located in $x + k \tau v$. From this we notice that the set $L_{i,k}$ represents all possible moving objects in $[0,1]^d$ that at time $k$ would be placed at the same location as the particle $i$, since $x+ \tau k v=x_i+ \tau k v_i$. Therefore, ghost particles can be understood as  possible objects that at every time sample share their location with a given particle.  In the example presented in \Cref{fig:ghostb}, the highlighted ghost point on top of the particles represents an object moving from left to right, that for $k= -1, 0,$ and $1$, would be located in $x_1,x_2$ and $x_3$ respectively.

\subsection{ Main result }
We are now ready to state the main result of this section. Several comments on the assumptions are given after the proof.

    \begin{thm}\label{thm:main} Let $T=\{(x_i,v_i)\}_{i=1,\dots,N}$ be  a configuration of $N$ particles,  $w \in \K^N$ and  $\calK\subseteq\{-K,\dots,K\}$ be such that $|\calK|\ge 3$. Let
        \[
            \omega = \sum_{i=1}^N w_i\delta_{(x_i,v_i)}\in\mathcal{M}(\Omega)
        \]
        be the unknown measure to be recovered. Assume that:
        \begin{enumerate}[(1)]
            \itemsep0em
            \item for every $k\in\calK$ and  $\eta \in \sgn\K^N$ there exists a static dual certificate $\tilde q_k(x) $ satisfying \eqref{eq:averagea};
            \item and the configuration does not admit ghost particles.
        \end{enumerate}	
        Then $\omega$ is the unique solution of the dynamical recovery problem \eqref{eq:lasso_v}, where $\mathcal{G}$ is given by \eqref{eq:GFourier}.
    \end{thm}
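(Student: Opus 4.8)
The plan is to invoke \Cref{prop:dual}: it suffices to produce, for every sign vector $\eta\in\sgn\K^N$, a dual certificate in the sense of \Cref{def:dualCertificate}. Since assumption~(1) furnishes static dual certificates $\tilde q_k$ satisfying \eqref{eq:averagea} for each $k\in\calK$, the natural candidate is the $\calK$-static average certificate $q(x,v)=\frac{1}{|\calK|}\sum_{k\in\calK}\tilde q_k(x+k\tau v)$ of \Cref{def:staticAverage}. This $q$ already has dynamical form and, as recorded in \eqref{eq:staavebou}, interpolates $q(x_i,v_i)=\eta_i$ and satisfies $|q|\le 1$ on all of $\Omega$. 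The only gap with \eqref{eq:cond} is that the bound $|q|\le 1$ off $\T$ might fail to be strict, and closing this gap using assumption~(2) is the whole content of the argument.

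Suppose, for contradiction, that $|q(x^*,v^*)|=1$ for some $(x^*,v^*)\in\Omega\setminus\T$. Writing $q(x^*,v^*)=\frac{1}{|\calK|}\sum_{k\in\calK}q_k(x^*,v^*)$ with each $|q_k(x^*,v^*)|=|\tilde q_k(x^*+k\tau v^*)|\le 1$, the equality case of the triangle inequality forces all the complex numbers $q_k(x^*,v^*)$, $k\in\calK$, to have modulus $1$ and to coincide. In particular $|\tilde q_k(x^*+k\tau v^*)|=1$ for every $k\in\calK$; since $(x^*,v^*)\in\Omega$ we have $x^*+k\tau v^*\in[0,1]^d$ by \eqref{eq:omega}, so the strict inequality in \eqref{eq:averagea} shows that for each $k\in\calK$ there is an index $i$ with $x^*+k\tau v^*=x_i+k\tau v_i$, i.e.\ $(x^*,v^*)\in L_{i,k}$. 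Thus the sets $I_k:=\{\,i:(x^*,v^*)\in L_{i,k}\,\}$ are nonempty for all $k\in\calK$.

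It remains to exploit the linear structure of the subspaces $L_{i,k}$, and this is the step carrying the geometric insight of the theorem. Observe first that for distinct $k,k'\in\calK$ and any indices $i,i'$, the system $x+k\tau v=x_i+k\tau v_i$, $x+k'\tau v=x_{i'}+k'\tau v_{i'}$ has at most one solution (subtract to determine $v$ uniquely, then $x$), so $L_{i,k}\cap L_{i',k'}$ contains at most one point; moreover if $i=i'$ that point is $(x_i,v_i)\in\T$. Consequently, if some $i$ lay in $I_k\cap I_{k'}$ for distinct $k,k'$, then $(x^*,v^*)=(x_i,v_i)\in\T$, a contradiction; hence the sets $\{I_k\}_{k\in\calK}$ are pairwise disjoint and we may pick pairwise distinct indices $i_k\in I_k$. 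Since $(x^*,v^*)$ lies in every $L_{i_k,k}$ and any two of these (for distinct time samples) meet in a single point, $\bigcap_{k\in\calK}L_{i_k,k}=\{(x^*,v^*)\}$; with these $|\calK|$ pairwise distinct indices this exhibits $(x^*,v^*)$ as a ghost particle, contradicting assumption~(2). Therefore $|q(x,v)|<1$ for all $(x,v)\in\Omega\setminus\T$, so $q$ is a genuine dual certificate; since $\eta\in\sgn\K^N$ was arbitrary, \Cref{prop:dual} yields that $\omega$ is the unique minimizer of \eqref{eq:lasso_v}. The main obstacle is the passage in the last paragraph from the analytic equality $|q|=1$ to the combinatorial--geometric conclusion that $(x^*,v^*)$ is a ghost particle; the triangle-inequality and linear-algebra computations are routine.
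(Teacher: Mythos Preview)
Your proof is correct and follows essentially the same route as the paper's: build the $\calK$-static average certificate from assumption~(1), then show that any point where $|q|=1$ must lie in $\T$ or be a ghost particle, using that $L_{i,k}\cap L_{j,k'}$ is at most a singleton for $k\neq k'$ and equals $\{(x_i,v_i)\}$ when $i=j$. The only cosmetic difference is that you phrase the last step as a contradiction (assuming $(x^*,v^*)\notin\T$ and deducing it is a ghost), whereas the paper states the direct implication $|q(x,v)|=1\Rightarrow (x,v)\in\T\cup G$; the geometric content is identical.
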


    \begin{proof}
        By \Cref{prop:dual}, it is sufficient to construct a dual certificate for the dynamical recovery problem.
        Thanks to assumption (1), for every $\eta\in\sgn\K^N$ we can build a $\calK$-static average certificate $q(x,v)$. By \eqref{eq:staavebou} and assumption (2) it is enough to prove that 
        \begin{equation*}
            |q(x,v)| = 1 \;\implies\; (x,v) \in \T \cup G,
        \end{equation*}
        where $G$ is the set of all the ghost particles of the configuration, since the first condition of \eqref{eq:cond} is automatically satisfied by \eqref{eq:averagea}.

        Note that for $k_1 \neq k_2$ and any two $i,j \in \{1,\ldots,N\}$, the set $L_{i,k_1} \bigcap L_{j,k_2}$ has at most one element. In particular, we    notice that $L_{i,k_1} \bigcap L_{i,k_2} = \{(x_i,v_i)\}$. This observation will be useful below.

        Suppose $|q(x,v)| = 1$. Since $q(x,v)$ is defined as an average of terms $q_k(x,v)$, where each of them satisfies $|q_k(x,v)|\leq 1$, a necessary condition for $|q(x,v)|=1$ is that for every $k\in\calK$ we have $|\tilde q_k(x+k\tau v)|=|q_k(x,v)|=1$. By definition that happens exclusively if for every $k \in \calK$, $(x,v) \in L_{i,k}$ for some $i \in \{ 1,\ldots, N\}$. In other words, there exists a family of indexes $i_k \in \{1,\ldots,N\}$ such that 
         \[
             \{(x,v)\} = \bigcap_{k\in\calK} L_{i_k,k}.
         \]
          There are two cases: if some of these indexes repeat (i.e.\ $i_{k_1} = i_{k_2}$, for $k_1 \neq k_2$), then we know that $(x,v)$ must be equal to particle $i_{k_1}$. In the case none of the indexes $i_k$ repeats, then by definition $(x,v)$ is a ghost particle.
    \end{proof}

    \subsection{Comments on the hypotheses of \Cref{thm:main}}

    Let us now comment on assumptions (1) and (2), and show why these are easily satisfied. Let us start from assumption (1), namely the existence of static dual certificates.
    \begin{rem}\label{rem:thm}
        Take $k\in \{-K,\ldots,K\}$. There exists a static dual certificate  $\tilde q_k(x) $ satisfying \eqref{eq:averagea} in any of the following situations.
        \begin{enumerate}[(a)]
            \itemsep0em 
            \item The particles at time step $k\tau$ are sufficiently separated, namely
                \begin{equation} \label{eq:sep}
                    \norm{(x_i + k \tau v_i) - (x_j + k \tau v_j) }_{\infty} \geq \frac{C_d}{f_c}, \qquad  i\neq j,
                \end{equation}
                and $f_c\ge C'_d$, where $C_d,C'_d>0$ are constants depending only on the dimension\footnote{Several bounds are known for these constants, for instance $C_d=2$ if $d=1$ and $\K=\C$, $C_d=1.87$ if $d=1$ and $\K=\R$, and $C_d=2.38$ if $d=2$ and $\K=\R$. More precise estimates may be derived, which yield slightly better constants.}  \cite{candes2014towards}.
            \item The weights $w_i$ are all positive and the particles at time step $k\tau$ are divided into groups, and within each group a minimum separation condition like \eqref{eq:sep} is satisfied \cite{2016-Morgenshtern-candes} (this holds in the discrete setting).
            \item The weights $w_i$ are all positive, $d=1$ and $f_c\ge 2N$ \cite{de2012exact}. (It is remarkable that in this case no minimum separation condition is required.) \label{rem:positive_weights}
        \end{enumerate}
    \end{rem}
    There are reasons to believe that (c) should be sufficient also if $d>1$ \cite{poon2017multi}, but as far as we are aware a rigorous proof of this fact is still missing. We also would like to remark that assumption (1) is equivalent to being able to solve at least three static problems individually, therefore Theorem~\ref{thm:main} does not completely express how better the fully dynamical problem is (except that it allows for the simultaneous recovery of the velocity and avoids the pre-selection of the proper time samples). In fact, the dynamical reconstruction turns out to perform much better than Theorem~\ref{thm:main} predicts, see Proposition~\ref{prop:removeGhost} and the simulations of Section~\ref{sec:numerical} below.

    Let us now turn to assumption (2), and show that it is satisfied almost surely if the particles $(x_i,v_i)$ are chosen uniformly at random, the following proposition is adapted from \cite{2017radial} to our case.
    \begin{prop}\label{prop:ghost}
        Assume $|\calK|\ge 3$. Let $\{(x_i,v_i)\}^N_{i=1}$ be independent random variables, drawn from absolutely continuous distributions $\mu_i$ supported in $\Omega$. Then almost surely there are no ghost points.
    \end{prop}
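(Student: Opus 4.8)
The plan is to show that the set of configurations $(x_1,v_1),\dots,(x_N,v_N)$ that admit at least one ghost particle has measure zero with respect to the product distribution $\mu_1\otimes\cdots\otimes\mu_N$; since each $\mu_i$ is absolutely continuous with respect to Lebesgue measure on $\R^{2d}$, it suffices to prove the bad set has Lebesgue measure zero in $(\R^{2d})^N$. By the definition of a ghost particle, the bad set is a finite union over choices of $m=|\calK|$ distinct indices $i_1,\dots,i_m$ and the (fixed) time samples $k_1,\dots,k_m\in\calK$ of the event that $\bigcap_{p=1}^m L_{i_p,k_p}$ is a single point of $\Omega$. A finite union of null sets is null, so I would fix one such choice of indices and times and bound the probability of that single event.

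Fix distinct $i_1,\dots,i_m$ and distinct $k_1,\dots,k_m$. The affine subspace $L_{i_p,k_p}=\{(x,v)\in\Omega:(x-x_{i_p})+k_p\tau(v-v_{i_p})=0\}$ depends only on the particle $(x_{i_p},v_{i_p})$ and on $k_p$. As already observed in the proof of \Cref{thm:main}, for $k_1\neq k_2$ the intersection $L_{i_1,k_1}\cap L_{i_2,k_2}$ contains at most one point; when it is nonempty this point $(g,w)$ is the unique solution of the linear system $(g-x_{i_1})+k_1\tau(w-v_{i_1})=0$, $(g-x_{i_2})+k_2\tau(w-v_{i_2})=0$, which is a fixed affine function of $(x_{i_1},v_{i_1},x_{i_2},v_{i_2})$ (invertible precisely because $k_1\neq k_2$). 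The key point is then this: for the intersection over all $p$ to be nonempty, this point $(g,w)$ — determined by the first two particles — must also lie on $L_{i_3,k_3}$, i.e.\ it must satisfy $(g-x_{i_3})+k_3\tau(w-v_{i_3})=0$. This is a single (vector) equation constraining $(x_{i_3},v_{i_3})$ to an affine subspace of $\R^{2d}$ of positive codimension, once $(x_{i_1},v_{i_1})$ and $(x_{i_2},v_{i_2})$ are fixed.

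I would make this precise by conditioning. Condition on the values of all particles except $(x_{i_3},v_{i_3})$; then $(g,w)$ is a fixed point, and the event in question is contained in the event $\{(x_{i_3},v_{i_3})\in H\}$ where $H=\{(x,v):x+k_3\tau v=g+k_3\tau w\}$ is a $d$-dimensional affine subspace of $\R^{2d}$, hence Lebesgue-null in $\R^{2d}$. Since $\mu_{i_3}$ is absolutely continuous, $\mu_{i_3}(H)=0$; integrating this conditional probability (Fubini, using independence) gives that the probability of the fixed-index, fixed-time event is zero. Summing over the finitely many choices of distinct indices and of $k_1,\dots,k_m\in\calK$ yields that ghost particles occur with probability zero. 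Note that here one genuinely uses $|\calK|=m\geq 3$: with only two time samples the point $(g,w)$ is generically well-defined and lies in $\Omega$ for an open set of configurations, so the third constraint is exactly what forces the null set.

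The main obstacle — really the only subtlety — is bookkeeping rather than a deep difficulty: one must check that the map $(x_{i_1},v_{i_1},x_{i_2},v_{i_2})\mapsto(g,w)$ is well-defined and measurable (it is affine on the open set where it is defined, namely always, since $k_1\neq k_2$ makes the $2d\times2d$ system invertible), and one must handle the boundary/degenerate configurations where some $L_{i_p,k_p}$ fail to intersect at all or intersect in more than a point — but in all those cases there is simply no ghost particle, so they only help. A clean way to package the argument is: the bad set is contained in $\bigcup \{(x_{i_3},v_{i_3})\in H(\text{rest})\}$, a countable (indeed finite) union of graphs of measurable functions with null fibers, hence null by Fubini.
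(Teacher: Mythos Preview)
Your argument is correct, and it is more direct than the paper's. The paper proceeds inductively: it introduces the set $\tilde G(T_{n-1})$ of \emph{potential} ghost particles for a configuration of $n-1$ particles, shows this set is finite (using that any two $L_{i,k_1}$, $L_{j,k_2}$ with $k_1\neq k_2$ meet in a single point), and then argues that the set of new particles $P_n$ that would create a ghost lies in a finite union of $d$-dimensional affine subspaces, hence has $\mu_n$-measure zero. Iterating this $N-m+1$ times reduces the problem to $m-1$ particles, where no ghost can exist. Your approach bypasses the induction entirely: you take the finite union over assignments $p\mapsto i_p$ up front, and for each fixed assignment you condition on all particles except $(x_{i_3},v_{i_3})$, observing that the first two constraints already pin down $(g,w)$ affinely and the third forces $(x_{i_3},v_{i_3})$ into a $d$-dimensional affine slice. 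Both arguments ultimately rest on the same geometric fact (two of the $L$'s determine the candidate ghost, the third gives a codimension-$d$ constraint), but your packaging is shorter and makes the role of $|\calK|\ge 3$ more transparent. The paper's inductive framing, on the other hand, is closer in spirit to the argument in \cite{2017radial} and would adapt more readily if one wanted to track how the number of ghost-producing configurations grows with $N$.
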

    \begin{proof} To simplify the notation denote $P_i = (x_i,v_i)\in \Omega$ and $T_N = \{P_i\}_{i=1}^N$. Set $m=|\calK|$. 
        Let $G(T_n)$ denote the set of ghost particles of a configuration $T_n$ of $n$ particles. We now define the potential ghost particles $\tilde G(T_{n-1})$ of a configuration $T_{n-1}$ of $n-1$ particles. We say that $(g,w)\in \tilde G(T_{n-1})$ if   there exists a particle $P_{n}\in\Omega$ such that $(g,w) \in G(T_{n-1} \cup \{P_{n}\})$, namely
        \[
            \tilde G(T_{n-1}):= \bigcup_{P_n\in\Omega}  G(T_{n-1} \cup \{P_{n}\}).
        \]
        By definition of ghost points, we have that if 
        $(g,w)\in \tilde G(T_{n-1})$ then there exist $m-1$ distinct time samples $k_1,\dots,k_{m-1}\in\calK$	and $m-1$ distinct particles $P_{i_1},\dots,P_{i_{m-1}}\in T_{n-1}$ such that 
        \[
            (g,w)\in \bigcap_{p=1}^{m-1} L_{i_p,k_p}.
        \]
        Notice that  $\tilde G(T_{n-1})$ is always finite. This stems from the fact that for any $k_1\neq k_2$ in $\calK$, the intersection of the sets $L_{i,k_1}, L_{j,k_2}$ are singletons for any $i,j \in \{1,\ldots,n\}$. Since $m\ge 3$ and $T_{n-1}$ and $\calK$ are finite sets, we have  a finite number of sets $L_{i,k}$ to intersect, leading to a finite  number of elements in $\tilde G(T_{n-1})$.

        Now we prove that, for any configuration of particles $T_{n-1}$ such that there are no ghost points, the set of new particles $P_n$ that would generate a ghost point has zero measure. More precisely, if $G(T_{n-1}) = \emptyset$ then
        \begin{equation} \label{eq:zerMea}
            \mu_n \left( \left\{ P_n \in \Omega: G(T_{n-1} \cup \{P_n\}) \neq \emptyset  \right\} \right) = 0.
        \end{equation}
        In order to prove this, notice that if $G(T_{n-1}) = \emptyset$ we have
        \[
            G(T_{n-1} \cup \{P_n\})    \subseteq \{ (g,w) \in \tilde G(T_{n-1}): \exists k \in \calK, \ (g,w) \in L_{n,k} \}.
        \]
        Thus, since $(g,w) \in L_{n,k}$ means $(g-x_n) + \tau k (w-v_n) = 0$ we obtain
        \begin{multline*}
            \left\{ P_n \in \Omega: G(T_{n-1} \cup \{P_n\}) \neq \emptyset \right\}  
            \\
            \subseteq \bigcup_{(g,w) \in \tilde G(T_{n-1})} \ \ \bigcup_{k \in \calK} \{P_n \in \Omega: x_n +\tau k v_n =g + \tau k w  \}.
        \end{multline*}
        Since this is a finite union of affine subspaces of dimension $d$,  it has zero Lebesgue measure. By the absolute continuity of $\mu_n$, we derive \eqref{eq:zerMea}. 

         For $n\in\{2,\dots,N\}$, fix a configuration of  $T_{n-1}$ particles. Denoting $dP_i = dx_i dv_i$, $f_i=\frac{d\mu_i}{dP_i}$ and
        $
         \mathds{1}_S=
         1$ if $S$ is true and 
          $
         \mathds{1}_S=
         0$ if $S$ is false,
        we have
        \begin{align*}
            \int_\Omega \mathds{1}_{G(T_n) \neq \emptyset} f_{n}(P_n) \,dP_n  
            & = \int_\Omega \left( \mathds{1}_{G(T_{n-1}) \neq \emptyset} + \mathds{1}_{G(T_{n-1}) = \emptyset} \mathds{1}_{G(T_{n}) \neq \emptyset}\right)  f_{n}(P_n) dP_n  \\
            & =   \mathds{1}_{G(T_{n-1}) \neq \emptyset} , 
        \end{align*}
        where in the last equality we used \eqref{eq:zerMea} and that  $\int_\Omega f_{n}(P_n) dP_n=1$. Using this property $N-m+1$ times for $n=N,N-1,\dots,m$ and setting $\mu=\otimes_{i=1}^N \mu_{i}$, we obtain
            \begin{align*}
                \mu (\left\{ (P_1,\ldots,P_N) : G(T_N) \neq \emptyset \right\})  &= \int_\Omega \dots \int_\Omega \mathds{1}_{G(T_N) \neq \emptyset} \prod_{i=1}^{N} f_{i}(P_i) dP_1\dots dP_N \\ & = \int_\Omega \dots \int_\Omega \mathds{1}_{G(T_{N-1}) \neq \emptyset} \prod_{i=1}^{N-1} f_{i}(P_i) dP_1\dots dP_{N-1}\\
                &=\cdots\\
                & = \int_\Omega \dots \int_\Omega \mathds{1}_{G(T_{m-1}) \neq \emptyset} \prod_{i=1}^{m-1} f_{i}(P_i) dP_1\dots dP_{m-1} \\
                & = 0,
            \end{align*}
        where the last equality follows from $\mathds{1}_{G(T_{m-1}) \neq \emptyset}=0$, since there cannot be any ghost particles if there are more time samples than particles.
    \end{proof}

    On the other hand, if ghost particles do arise, the conclusion of \Cref{thm:main} may not be true, even if a minimum separation condition is satisfied at all time steps (so that assumption (1) is satisfied). Even though the probability of a random configuration of particles to produce ghost particles is zero, it is worth considering it since  the stability of the problem will deteriorate for nearby configurations (see \Cref{sec:stability}). 

    In \Cref{fig:undetectable} we provide an example of this case, with three time measurements ($K=1$),  three particles $P_1$, $P_2$ and $P_3$ and three ghost particles $G_1$, $G_2$ and $G_3$. The configuration is constructed in such a way that, at each time step, the positions of the ghost particles coincide with those of the physical particles, thereby producing the same measurements. In other words, we have $\mathcal{G}(\sum_i \delta_{P_i})=\mathcal{G}(\sum_i \delta_{G_i})$ and $\norm{\sum_i \delta_{P_i}}_{TV}=\norm{\sum_i \delta_{G_i}}_{TV}$, and so the minimization problem \eqref{eq:lasso_v} has multiple solutions.

    In the following result, we generalize this observation to more general configurations.

    \begin{figure}
        \begin{center}
            \begin{tikzpicture}[scale = 0.9]
                \newcommand{\varExtra}{0.3};
                \draw[-,gray] (-1-\varExtra, 2+\varExtra) -- (1+\varExtra, -\varExtra);
                \draw[-,gray] (-1-\varExtra, \varExtra) -- (\varExtra, -\varExtra-1);
                \draw[-,gray] (-\varExtra, 3+\varExtra) -- (1+\varExtra, 2-\varExtra);
                \draw[-,gray] (-\varExtra, -1 -\varExtra) --( 1+\varExtra, \varExtra);
                \draw[-,gray] (-1-\varExtra,-\varExtra) -- (1+\varExtra, 2+\varExtra);
                \draw[-,gray] (-1-\varExtra, 2-\varExtra) -- (\varExtra, 3+\varExtra);
                \draw[-,gray] (-1,-\varExtra/12) -- (-1,2+\varExtra);
                \draw[-,gray] (0,-1-\varExtra) -- (0, 3+\varExtra);
                \draw[-,gray] (1, -\varExtra/12) -- (1, 2+\varExtra);
                \draw (-1,0) node {$\bullet$} node[below] {\scriptsize $P_1$};
                \draw (1,0) node {$\bullet$} node[below] {\scriptsize $P_2$};
                \draw (0, 3) node {$\bullet$} node[right] {\scriptsize $P_3$};
                \draw (0, -1) node {$\circ$} node[right] {\scriptsize $G_1$};
                \draw (-1, 2) node {$\circ$}node[right] {\scriptsize $G_2$};
                \draw (1,2) node {$\circ$} node[right] {\scriptsize $G_3$};	
                \draw[->] (-1.7,-1) -- (-1.7,3);
                \draw (-1.7,3) node [above] {$v$};
                \draw[->] (-2,0) -- (2,0);
                \draw (2,0) node [right] {$x$};
                % Drawing the leyend
                \newcommand{\legendLocx}{1.4};
                \newcommand{\legendLocy}{4.8};
                \draw[-] (\legendLocx,\legendLocy - 0.95) -- (\legendLocx + 2.9,\legendLocy - 0.95) -- (\legendLocx + 2.9,\legendLocy) -- (\legendLocx,\legendLocy) -- (\legendLocx,\legendLocy - 0.95);
                \draw (\legendLocx + 0.3, \legendLocy - 0.3) node {$\bullet$} node[right] { \footnotesize \ \ Particle};
                \draw (\legendLocx + 0.3,\legendLocy - 0.7) node {$\circ$} node[right] {\footnotesize \ \ Ghost particle};
                % Drawing the second image, showing each time step.
                \newcommand{\loLeftx}{4.5};
                \newcommand{\loLefty}{-0.7};
                \newcommand{\verSep}{1.8};
                \newcommand{\horSep}{0.6};
                \newcommand{\horLength}{8*\horSep};
                \newcommand{\verLength}{0.2};
                % separation from particle from the line
                \newcommand{\verSepPar}{0.25};
                % arrow drawing
                \newcommand{\arrowLen}{0.18};
                \newcommand{\drawArrow}[4]{\draw[->] (#1,#2)--(#1 #3 #4*\arrowLen,#2)};
                % vertical lines drawing
                \newcommand{\vertLin}[2]{ \draw[-] (#1,#2-\verLength/2)--(#1,#2+\verLength/2)};
                \draw[-] (\loLeftx,\loLefty)--(\loLeftx + \horLength,\loLefty);
                \draw[-] (\loLeftx,\loLefty+\verSep)--(\loLeftx + \horLength,\loLefty+\verSep);
                \draw[-] (\loLeftx,\loLefty+2*\verSep)--(\loLeftx + \horLength,\loLefty+2*\verSep);
                \draw (\loLeftx,\loLefty) node[left] {$k \scalebox{0.7}[1.0]{\( \ = \ \)} 1$ \hspace{0.01cm}};	
                \draw (\loLeftx,\loLefty + \verSep) node[left] {$k \scalebox{0.7}[1.0]{\( \ = \ \)}  0$ \hspace{0.01cm} };		
                \draw (\loLeftx,\loLefty + 2*\verSep) node[left] {$k \scalebox{0.7}[1.0]{\( \ = \ \)}  \unaryminus 1$};	
                \newcounter{forfor}
                \newcounter{forHeight}
                \forloop{forHeight}{0}{\value{forHeight}<3}{
                    \forloop{forfor}{1}{\value{forfor}<8}{
                        \vertLin{\loLeftx+\horSep*\value{forfor}}{\loLefty + \verSep*\value{forHeight}}; 
                } }
                \draw (\loLeftx+\horSep,\loLefty + 2*\verSep + \verSepPar) node {$\bullet$} node[above] {\scriptsize $P_3$};
                \drawArrow{\loLeftx+\horSep}{\loLefty + 2*\verSep + \verSepPar}{+}{3};
                \draw (\loLeftx+3*\horSep,\loLefty + 2*\verSep + \verSepPar) node {$\bullet$} node[above] {\scriptsize $P_1$};
                \draw (\loLeftx+5*\horSep,\loLefty + 2*\verSep + \verSepPar) node {$\bullet$} node[above] {\scriptsize $P_2$};
                \draw (\loLeftx+\horSep,\loLefty + 2*\verSep - \verSepPar) node {$\circ$} node[below] {\scriptsize $G_2$};
                \drawArrow{\loLeftx+\horSep}{\loLefty + 2*\verSep - \verSepPar}{+}{2};
                \draw (\loLeftx+3*\horSep,\loLefty + 2*\verSep - \verSepPar) node {$\circ$} node[below] {\scriptsize $G_3$};
                \drawArrow{\loLeftx+3*\horSep}{\loLefty + 2*\verSep - \verSepPar}{+}{2};
            \draw (\loLeftx+5*\horSep,\loLefty + 2*\verSep - \verSepPar) node {$\circ$} node[below] {\scriptsize $G_1$};
            \drawArrow{\loLeftx+5*\horSep}{\loLefty + 2*\verSep - \verSepPar}{-}{1};
            % next line
            \draw (\loLeftx+4*\horSep,\loLefty + \verSep + \verSepPar) node {$\bullet$} node[above] {\scriptsize $P_3$};
            \drawArrow{\loLeftx+4*\horSep}{\loLefty + \verSep + \verSepPar}{+}{3};
            \draw (\loLeftx+3*\horSep,\loLefty + \verSep + \verSepPar) node {$\bullet$} node[above] {\scriptsize $P_1$};
            \draw (\loLeftx+5*\horSep,\loLefty + \verSep + \verSepPar) node {$\bullet$} node[above] {\scriptsize $P_2$};
            \draw (\loLeftx+4*\horSep,\loLefty + \verSep - \verSepPar) node {$\circ$} node[below] {\scriptsize $G_1$};
            \drawArrow{\loLeftx+4*\horSep}{\loLefty + \verSep - \verSepPar}{-}{1};
            \draw (\loLeftx+3*\horSep,\loLefty + \verSep - \verSepPar) node {$\circ$} node[below] {\scriptsize $G_2$};
            \drawArrow{\loLeftx+3*\horSep}{\loLefty + \verSep - \verSepPar}{+}{2};
            \draw (\loLeftx+5*\horSep,\loLefty + \verSep - \verSepPar) node {$\circ$} node[below] {\scriptsize $G_3$};
            \drawArrow{\loLeftx+5*\horSep}{\loLefty + \verSep - \verSepPar}{+}{2};
            % next line
            \draw (\loLeftx+7*\horSep,\loLefty  + \verSepPar) node {$\bullet$} node[above] {\scriptsize $P_3$};
            \drawArrow{\loLeftx+7*\horSep}{\loLefty + \verSepPar}{+}{3};
            \draw (\loLeftx+3*\horSep,\loLefty  + \verSepPar) node {$\bullet$} node[above] {\scriptsize $P_1$};
            \draw (\loLeftx+5*\horSep,\loLefty  + \verSepPar) node {$\bullet$} node[above] {\scriptsize $P_2$};
            \draw (\loLeftx+7*\horSep,\loLefty  - \verSepPar) node {$\circ$} node[below] {\scriptsize $G_3$};
            \drawArrow{\loLeftx+7*\horSep}{\loLefty - \verSepPar}{+}{2};
            \draw (\loLeftx+3*\horSep,\loLefty  - \verSepPar) node {$\circ$} node[below] {\scriptsize $G_1$};
            \drawArrow{\loLeftx+3*\horSep}{\loLefty - \verSepPar}{-}{1};
            \draw (\loLeftx+5*\horSep,\loLefty  - \verSepPar) node {$\circ$} node[below] {\scriptsize $G_2$};
            \drawArrow{\loLeftx+5*\horSep}{\loLefty - \verSepPar}{+}{2};
        \end{tikzpicture}
    \end{center}
        \caption{In the case of three time measurements, a configuration of points and speeds that allow multiple reconstructions. The lines in the left diagram represent $L_{i,k}$ for each time sample $k$ and particle $i$. On the right hand side we can observe the relative position of each particle at each time step. }
        \label{fig:undetectable}
\end{figure}

    \begin{prop} \label{prop:undetect}
        Take $w\in\R_+^{m}$, with $m = \lvert \calK \rvert\ge 3$, and let $\{(x_i,v_i)\}_{i=1}^m \subseteq \Omega$ be a configuration of $m$ distinct particles admitting  $m$ distinct ghost particles $\{(g_j, w_j)\}_{j=1}^m \subseteq \Omega$. Let $\omega=\sum_{i=1}^{m}w_i\delta_{(x_i,v_i)}$. Suppose that for every $k \in \calK $ and every $i \in \{1,\ldots,m\}$ there exists a unique ghost particle $(g_j,w_j)$ in the affine space $L_{i,k}$, i.e.
        \begin{equation*}
            g_j-x_i + k \tau(w_j - v_i) = 0.
        \end{equation*}
        Then the minimization problem in \eqref{eq:lasso_v} admits infinitely many  solutions. 
    \end{prop}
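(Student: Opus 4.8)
The plan is to exhibit an explicit one–parameter family of pairwise distinct feasible measures, all with the same total variation as $\omega$, and then to certify with a single dual certificate that this common value is the minimum of \eqref{eq:lasso_v}; infinitely many minimizers then follow at once. Throughout write $m=|\calK|$, $\calK=\{k_1,\dots,k_m\}$, $P_i=(x_i,v_i)$ and $G_j=(g_j,w_j)$.

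First I would record the combinatorial structure behind the hypothesis. For each $k\in\calK$ let $\rho_k(i)$ be the index of the unique ghost lying on $L_{i,k}$, so that $g_{\rho_k(i)}+k\tau w_{\rho_k(i)}=x_i+k\tau v_i$; that is, at time step $k$ the ghost $G_{\rho_k(i)}$ and the particle $P_i$ occupy the same location. I claim each $\rho_k$ is a permutation of $\{1,\dots,m\}$: since $m=|\calK|$, every ghost $G_j$ is by definition the common point of $m$ affine subspaces $L_{\sigma_j(p),k_p}$, one per time sample, so for every $k\in\calK$ it lies on some $L_{i,k}$ and hence coincides with the unique ghost of that subspace; thus $\rho_k$ is onto, and (both index sets having $m$ elements) bijective. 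Injectivity of $\rho_k$ forces the subspaces $L_{1,k},\dots,L_{m,k}$ to be distinct, i.e.\ the $m$ positions $x_i+k\tau v_i$ are pairwise distinct at every $k\in\calK$; and from this one also checks that no ghost equals any particle (if $G_j=P_i$ then $P_i\in L_{\sigma_j(p),k_p}$ would force $i=\sigma_j(p)$ for every $p$, impossible for $m\ge 3$), so the $2m$ points $P_1,\dots,P_m,G_1,\dots,G_m$ are pairwise distinct.

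Next I would introduce the family. For $c\in\R$ set $\lambda_c:=\sum_{i=1}^m(w_i-c)\,\delta_{P_i}+c\sum_{j=1}^m\delta_{G_j}$. Since $\varphi_{l,k}(G_j)=\varphi_l(g_j+k\tau w_j)=\varphi_l(x_{\rho_k^{-1}(j)}+k\tau v_{\rho_k^{-1}(j)})=\varphi_{l,k}(P_{\rho_k^{-1}(j)})$ and $\rho_k$ is a bijection, the ghost contributions reassemble as $\sum_j\varphi_{l,k}(G_j)=\sum_i\varphi_{l,k}(P_i)$, whence $\mathcal{G}\lambda_c=\bigl(\sum_i w_i\,\varphi_{l,k}(P_i)\bigr)_{l,k}=\mathcal{G}\omega=y$, so each $\lambda_c$ is feasible. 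Because the $2m$ points are distinct, for $0\le c\le\min_i w_i$ all weights $w_i-c$ and $c$ are nonnegative and $\norm{\lambda_c}_{\textrm{TV}}=\sum_i(w_i-c)+mc=\sum_i w_i=\norm{\omega}_{\textrm{TV}}$; moreover $\lambda_c-\lambda_{c'}=(c'-c)\bigl(\sum_i\delta_{P_i}-\sum_j\delta_{G_j}\bigr)\ne0$ for $c\ne c'$, so $\{\lambda_c:0\le c\le\min_i w_i\}$ is an infinite set of distinct feasible measures, with $\lambda_0=\omega$.

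Finally I would verify that $\sum_i w_i$ is the minimal value of \eqref{eq:lasso_v}, which is the one step going beyond bookkeeping. Working in the regime where assumption (1) of \Cref{thm:main} holds (a minimum-separation condition at each $k\in\calK$ suffices), take static dual certificates $\tilde q_k$ for the sign pattern $\eta=(1,\dots,1)$ and the $m$ distinct positions $\{x_i+k\tau v_i\}_i$ as in \eqref{eq:averagea}, and form the static average certificate $q=\tfrac1m\sum_{k\in\calK}q_k$ of \eqref{eq:averageb}. Then $q$ has dynamical form \eqref{eq:form}, $\norm{q}_\infty\le1$, and $q(P_i)=q(G_j)=\tfrac1m\sum_{k\in\calK}1=1$ for all $i,j$, because each $\tilde q_k$ equals $1$ at every point $x_i+k\tau v_i$ and, by the second paragraph, this is also the set of time-$k$ positions of the ghosts. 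Since $q$ has dynamical form, the pairing $\langle q,\nu\rangle$ depends only on $y=\mathcal{G}\nu$, so $\langle q,\nu\rangle=\langle q,\lambda_0\rangle=\sum_i w_i$ for every feasible $\nu$, and therefore $\norm{\nu}_{\textrm{TV}}\ge|\langle q,\nu\rangle|=\sum_i w_i$ since $\norm{q}_\infty\le1$. Hence every $\lambda_c$ with $0\le c\le\min_i w_i$ attains the minimum, and \eqref{eq:lasso_v} has infinitely many solutions. I expect the construction of $\lambda_c$ to be the conceptual core; the only genuinely delicate point is ruling out a feasible measure of strictly smaller total variation, which is exactly what forces the dual-certificate argument — equivalently the separation-type hypothesis (1) of \Cref{thm:main} — rather than the ghost geometry alone.
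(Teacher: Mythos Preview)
Your construction of the one–parameter family $\lambda_c$ is exactly the paper's (there written $\omega_\beta=\omega-\beta h$ with $h=\sum_i\delta_{P_i}-\sum_j\delta_{G_j}$), and your combinatorial unpacking via the permutations $\rho_k$ makes explicit what the paper leaves implicit when it cancels the two sums in $\langle h,\varphi_{l,k}\rangle$. Your argument that no ghost coincides with any particle is also equivalent to the paper's verification that $h\neq0$.

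Where you diverge is in the final step. The paper stops after showing $\mathcal{G}\omega_\beta=y$ and $\norm{\omega_\beta}_{\textrm{TV}}=\norm{\omega}_{\textrm{TV}}$, and simply declares the $\omega_\beta$ to be solutions of \eqref{eq:lasso_v}. You correctly notice that this presupposes $\norm{\omega}_{\textrm{TV}}$ is the optimal value, and you supply a dual-certificate argument to certify it. That argument is sound, but it imports an assumption not present in the proposition: you need static dual certificates at each $k\in\calK$, i.e.\ hypothesis (1) of \Cref{thm:main}. Without some such assumption there is no guarantee that a feasible measure of strictly smaller total variation does not exist, in which case neither $\omega$ nor any $\lambda_c$ would be a minimizer, and the set of minimizers could in principle be a singleton distinct from all of them. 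So the paper's proof has precisely the gap you identified; your fix closes it at the cost of an extra hypothesis. What both arguments do establish unconditionally is the operationally relevant statement that $\omega$ cannot be the \emph{unique} minimizer of \eqref{eq:lasso_v}.
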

    \begin{proof}
        Consider the measure 
        \begin{equation*}
            h = \sum_{i=1}^{m} \delta_{(x_i,v_i)} - \sum_{j=1}^{m} \delta_{(g_j,w_j)}\in \calM (\Omega).
        \end{equation*}
        First notice that $h \neq 0$. If we had $h = 0$,  each particle  would be a ghost particle. 
        By definition of ghost particle, we would have  $(g,w)\in L_{i_1,k_1}\cap L_{i_2,k_2}$ for some $k_1\neq k_2$ in $\calK$, and so
        \[
            (g,w),(x_{i_1},v_{i_1})\in L_{i_1,k_1},\qquad
        (g,w),(x_{i_2},v_{i_2})\in L_{i_2,k_2}.
    \]
        Since $(x_{i_1},v_{i_1})$ and $(x_{i_2},v_{i_2})$ are two different ghost particles, either  $L_{i_1,k_1}$ or $L_{i_2,k_2}$ would contain two ghost particles, contradicting the hypotheses.

        The measure $h$ is undetectable, since it belongs to the kernel of the operator  $\mathcal{G}$ defined in \eqref{eq:G}, as we now show. We readily compute
        \[
            \left(\mathcal{G}h\right)_{l,k}  =  \left< h , \varphi_{l,k} \right>    = \sum_{i = 1}^{m} \varphi_l(x_i + k \tau v_i) - \sum_{j = 1}^{m} \varphi_l(g_j + k \tau \eta_j) =0.
        \]
        Indeed, by our hypothesis on the ghost particles, we have that at every time sample $k$, for every position $x_i + k \tau v_i \in \Omega$ there exists only one ghost point such that $g_j + k \tau w_j = x_i + k \tau v_i$. Therefore, each term of the first sum cancels out with one term of the second sum, as desired.

         For $\beta \in [0, \min_i w_i]$, consider the measure
        \begin{equation*}
            \omega_\beta = \omega - \beta h = \sum_{i = 1}^{m} ( w_i - \beta) \delta_{(x_i,v_i)} +\beta \sum_{j = 1}^{m} \delta_{(g_j, w_j)}.
        \end{equation*}
         Since $\mathcal{G}\omega_\beta = \mathcal{G}\omega$ and $\norm{\omega_\beta}_{TV} = \sum_i ( w_i-\beta) + \sum_j \beta = \sum_i  w_i = \norm{\omega}_{TV}$, we obtain that $\omega_\beta$ is a solution to \eqref{eq:lasso_v} for every $\beta \in [0, \min_i w_i]$.
    \end{proof}

    It is worth observing that the non-uniqueness of  solutions arises also if there exists a subset of the particles satisfying the conditions of \Cref{prop:undetect}.

\section{Other constructions of dynamical dual certificates}\label{sec:other}

In this section we show that the construction of dynamical dual certificates as static average certificates, although natural and efficient, is not the end of the story. In other words, exact recovery may be guaranteed even if assumptions (1) and (2) of \Cref{thm:main} are not satisfied. 
More precisely, we provide alternative constructions of dynamical dual certificates for configurations that either do not allow static dual certificates (assumption (1)) or have ghost particles (assumption (2)). In particular, the first case shows an advantage of our space-velocity model over applying static reconstructions at each time sample. This aspect will also be investigated in \Cref{sec:numerical} below.

\subsection{Dual certificates with no static separation condition}

    The following example of dual certificate is purely numerical, but  shows the possibility of constructing a dual certificate in cases in which static reconstructions are expected to fail.

    \begin{figure}
        \centering{}\captionsetup[subfigure]{width=150pt}
        \subfloat[This  diagram represents two static particles and a moving particle at each time sample.]{\label{fig:2static1movinga}
        \centering
        \newcommand{\varDist}{2}
        \newcommand{\varSlope}{1}
        \newcommand{\varLinelength}{2.3}
        \newcommand{\varLinelengthShort}{1.3}
        \newcommand{\varXone}{1}
        \newcommand{\varXtwo}{\varXone+\varDist}
        \newcommand{\varXthree}{\varXone + 2*\varDist}
                \begin{tikzpicture}
                    \newcommand{\coordy}{0};
                    \newcommand{\coordx}{0};
                    % horizontal separation in between vertical sticks
                    \newcommand{\horSep}{0.8};
                    % horizontal total length of diagram.
                    \newcommand{\horLength}{6*\horSep};
                    % vertical stick length
                    \newcommand{\verLength}{0.2};
                    % separation particle from line
                    \newcommand{\verSepPar}{0.25};
                    % arrow drawing
                    \newcommand{\arrowLen}{0.25};
                    \newcommand{\drawArrow}[3]{\draw[->] (#1,#2)--(#1 #3 \arrowLen,#2)};
                    \newcommand{\vertLin}[2]{ \draw[-] (#1,#2-\verLength/2)--(#1,#2+\verLength/2)};

                    \draw[->] (\coordx,\coordy)--(\coordx + \horLength,\coordy);
                    \newcounter{forfors}
                    \forloop{forfors}{1}{\value{forfors}<6}{
                        \vertLin{\coordx+\horSep*\value{forfors}}{\coordy};
                    }
                    \draw (\coordx + \horLength,\coordy) node[right] {$x$};
                    \draw (\coordx+2*\horSep, \coordy - \verSepPar) node {$\bullet$};
                    \draw (\coordx+4*\horSep, \coordy - \verSepPar) node {$\bullet$};
                    \draw[<->] (\coordx+2*\horSep, \coordy - 2*\verSepPar)--(\coordx+4*\horSep, \coordy - 2*\verSepPar);
                    \draw (\coordx+3*\horSep, \coordy - 2*\verSepPar) node[below] {\footnotesize $1/f_c$};
                    \draw (\coordx+\horSep*1,\coordy + \verSepPar) node {$\circ$} node[right, rotate=60] { \footnotesize \  $\leftarrow k=\unaryminus 2$};
                    \drawArrow{\coordx+\horSep*1}{\coordy + \verSepPar}{+};
                    \draw (\coordx+\horSep*2,\coordy + \verSepPar) node {$\circ$} node[right, rotate=60] { \footnotesize \  $\leftarrow k=\unaryminus 1$};
                    \drawArrow{\coordx+\horSep*2}{\coordy + \verSepPar}{+};
                    \draw (\coordx+\horSep*3,\coordy + \verSepPar) node {$\circ$} node[right, rotate=60] { \footnotesize \  $\leftarrow k=0$};
                    \drawArrow{\coordx+\horSep*3}{\coordy + \verSepPar}{+};
                    \draw (\coordx+\horSep*4,\coordy + \verSepPar) node {$\circ$} node[right, rotate=60] { \footnotesize \  $\leftarrow k=1$};
                    \drawArrow{\coordx+\horSep*4}{\coordy + \verSepPar}{+};
                    \draw (\coordx+\horSep*5,\coordy + \verSepPar) node {$\circ$} node[right, rotate=60] { \footnotesize \  $\leftarrow k=2$};
                    \drawArrow{\coordx+\horSep*5	}{\coordy + \verSepPar}{+};
                    % Drawing the leyend
                    \newcommand{\legendLocx}{1.4	};
                    \newcommand{\legendLocy}{3.0};
                    \draw[-] (\legendLocx,\legendLocy - 0.95) -- (\legendLocx + 2.85,\legendLocy - 0.95) -- (\legendLocx + 2.85,\legendLocy) -- (\legendLocx,\legendLocy) -- (\legendLocx,\legendLocy - 0.95);
                    \draw (\legendLocx + 0.3, \legendLocy - 0.3) node {$\bullet$} node[right] { \footnotesize \ \ Static particles};
                    \draw (\legendLocx + 0.3,\legendLocy - 0.7) node {$\circ$} node[right] {\footnotesize \ \ Moving particle};
                \end{tikzpicture}
        }
        \hfill{} \subfloat[A valid dual certificate of the configuration, visualized in the space-velocity plane.]{\label{fig:2static1movingb}
        \centering
                \includegraphics[width=0.45\textwidth]{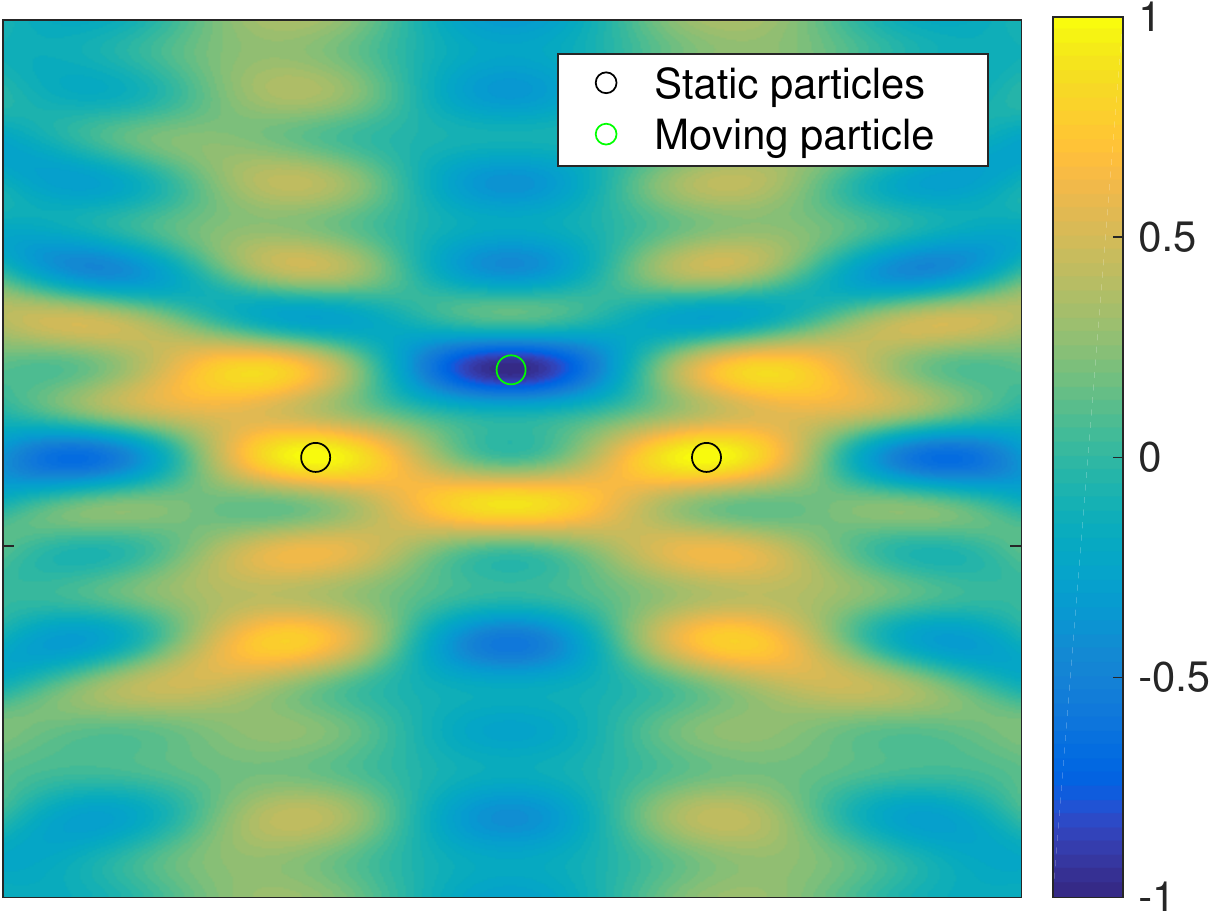}		
        }
        \caption{An example of a  dynamical dual certificate whithout a static minimum separation condition. }
        \label{fig:2static1moving}
    \end{figure}

    The chosen configuration is presented in  \Cref{fig:2static1movinga}, where we consider the one-dimensional case ($d=1$), five time measurements ($K=2$), two static particles barely separated enough to allow a reconstruction and a third moving particle. We give positive weights $ w_i$ to the static particles, and a negative weight to the  moving one.
     Since this third particle is at each time measurement close to another particle and has a different sign, it is not possible to localize it with a static reconstruction at any point. We recall that $f_c$ is the maximum imaging frequency, and $\frac{1}{2f_c}$ is far below the optimal minimum separation distance (see \Cref{rem:thm} and  \cite[Section~5]{candes2014towards}). 

    In  \Cref{fig:2static1movingb} we can see a dual certificate for this configuration (with  $f_c=20$). To obtain this dual certificate, we followed the construction done in \cite{candes2014towards} for the two dimensional case, but considering only the frequencies available in our setting given by the set \eqref{eq:butterflySet}.

    \subsection{ Dual certificates in presence of ghost particles }

As we saw in \Cref{fig:ghostb}, in the presence of ghost points the static average dual certificate constructed in \eqref{eq:average} is not a valid dynamical dual certificate if the values $\eta_i$ have  a constant sign. Indeed,  the static average dual certificate will have absolute value equal to $1$ in the ghost particles, since the values $\gamma_{i,k}$ on $L_{i,k}$ are simply set to $\eta_i$ so that $\frac{1}{|\calK|}\sum_{k\in\calK} \gamma_{i,k}=\eta_i$ (see \Cref{fig:staticCertificatePerturba}). However, by making a slightly different choice for $\gamma_{i,k}$ it is possible to 
have  simultaneously $q(x_i,v_i)=\eta_i$ and $|q(g,w)|<1$ on every ghost particle, thereby obtaining a valid dual certificate (see \Cref{fig:staticCertificatePerturbb}).

    \begin{figure}
        \centering{}\captionsetup[subfigure]{width=150pt}
        \subfloat[The static average dual certificate (corresponding to $\epsilon=0$) is not a valid dual certificate since it has value $1$ in the two ghost particles.]{\label{fig:staticCertificatePerturba}
            \includegraphics[width=0.45\textwidth]{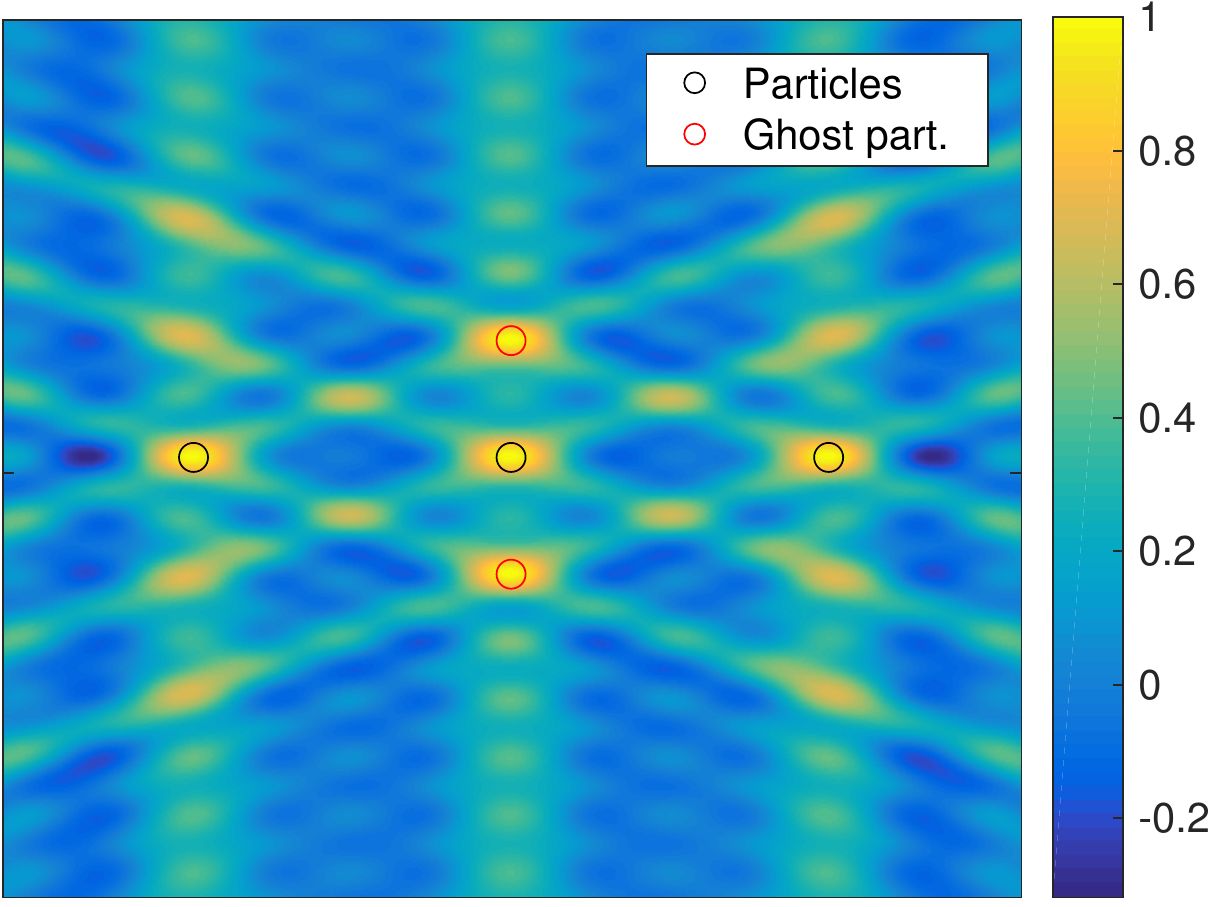}
        }
        \hfill{}
         \subfloat[The valid dual certificate constructed as a perturbation of the static average dual certificate by taking $\epsilon = 0.08$.]{\label{fig:staticCertificatePerturbb}
            \includegraphics[width=0.45\textwidth]{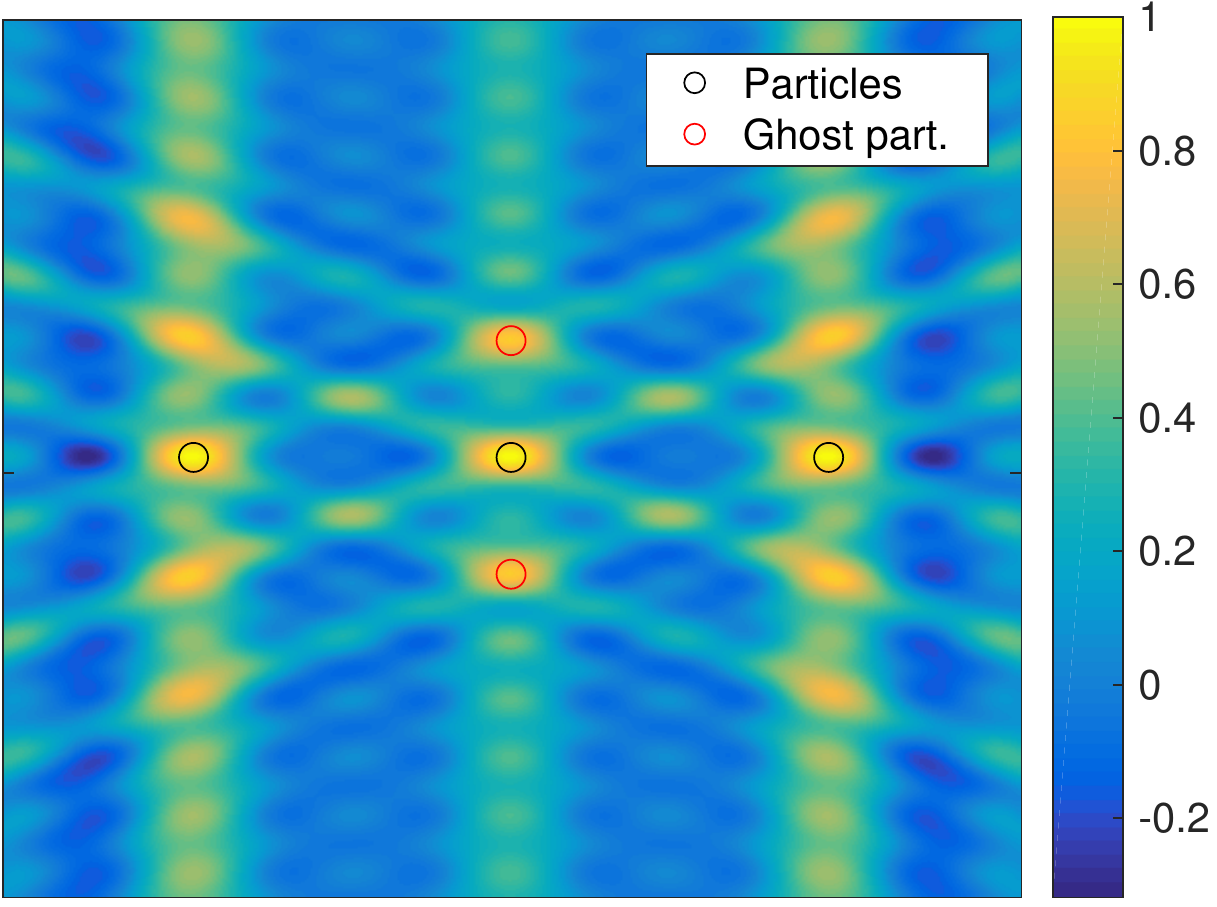}
        }
        \caption{Dual certificates for the configuration of particles of \Cref{fig:ghostb}.
        }
        \label{fig:staticCertificatePerturb}
    \end{figure}

    This construction is formalized in the following result.

    \begin{prop} \label{prop:removeGhost}
        Take $d=1$, $f_c\ge 128$, $K=1$ and $\Delta x\in [\frac{1.87}{f_c},1)$. Consider 3 particles with locations $(-\Delta x,0)$, $(0,0)$ and $ (\Delta x,0)$. Then for every $\eta\in\sgn\K^3$ there exists a dual certificate.
    \end{prop}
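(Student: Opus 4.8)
The plan is to reduce to the constant sign pattern $\eta=(1,1,1)$ and, for it, to produce a dual certificate as a small perturbation of the static average certificate; the non-constant patterns will be dispatched by the static average alone. Throughout write $P_1=(-\Delta x,0)$, $P_2=(0,0)$, $P_3=(\Delta x,0)$ (translating the configuration so that the three positions lie in $(0,1)$, which does not affect the existence of a dual certificate). This configuration has ghost particles, so \Cref{thm:main} does not apply directly; I would first pin down which ones matter. By \Cref{rem:thm}(a) --- which holds because $\Delta x\ge 1.87/f_c$ and $f_c\ge 128$ --- there is a static dual certificate $\tilde q$ at the positions $\{-\Delta x,0,\Delta x\}$ with $\tilde q(x_i)=\eta_i$ and $\lvert\tilde q(y)\rvert<1$ for $y$ off the positions; put
\[
    q_0(x,v)=\tfrac13\bigl(\tilde q(x-\tau v)+\tilde q(x)+\tilde q(x+\tau v)\bigr),
\]
which is of dynamical form \eqref{eq:form}, takes the value $\eta_i$ at $P_i$, and satisfies $\lvert q_0\rvert\le 1$. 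Moreover $\lvert q_0(x,v)\rvert=1$ forces $\lvert\tilde q(x-\tau v)\rvert=\lvert\tilde q(x)\rvert=\lvert\tilde q(x+\tau v)\rvert=1$, so --- since $(x,v)\in\Omega$ makes $x-\tau v,x,x+\tau v$ lie in $[0,1]$ --- each of these three numbers must be one of $-\Delta x,0,\Delta x$; as their outer pair sums to twice the middle one, the only possibilities are the three particles ($v=0$) and the two ghosts $G_\pm=(0,\pm\Delta x/\tau)$. Consequently, if $\eta$ is not constant then $\lvert q_0(G_\pm)\rvert=\tfrac13\lvert\eta_1+\eta_2+\eta_3\rvert<1$ and $\lvert q_0\rvert<1$ on $\Omega\setminus\{P_1,P_2,P_3\}$, so $q_0$ is already a dual certificate; and for $\eta=e^{i\theta}(1,1,1)$ one just multiplies a certificate for $(1,1,1)$ by $e^{i\theta}$. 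Hence only $\eta=(1,1,1)$ is left.

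For $\eta=(1,1,1)$, take $\tilde q$ from the construction of \cite{candes2014towards}, which is moreover real-valued and has $\tilde q'(x_i)=0$, $\tilde q''(x_i)<0$, $-1<\tilde q(y)\le 1$. Choose a trigonometric polynomial $e$ of degree $\le f_c$ with $e(\pm\Delta x)=0$ and $e(0)=1$ (possible as soon as $f_c\ge 2$), and for $\epsilon>0$ set
\[
    q_\epsilon(x,v)=q_0(x,v)+\epsilon R(x,v),\qquad R(x,v)=\tfrac16\bigl(e(x-\tau v)+e(x+\tau v)\bigr)-\tfrac13 e(x).
\]
This $q_\epsilon$ is again of dynamical form, being $\tfrac13\sum_{k=-1}^{1}\tilde q_k(x+k\tau v)$ with $\tilde q_{\pm1}=\tilde q+\tfrac{\epsilon}{2}e$ and $\tilde q_0=\tilde q-\epsilon e$, all of degree $\le f_c$. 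A short computation gives $R(P_i)=0$ and $\nabla R(P_i)=0$ for $i=1,2,3$ --- the $x$-derivative vanishes because the coefficients $\tfrac16,\tfrac16,-\tfrac13$ sum to $0$, the $v$-derivative because $R$ is even in $\tau v$ --- whence $q_\epsilon(P_i)=1$ and $\nabla q_\epsilon(P_i)=0$; and $R(G_\pm)=-\tfrac13<0$, which is the whole point.

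It then remains to verify that $\lvert q_\epsilon\rvert<1$ on $\Omega\setminus\{P_1,P_2,P_3\}$ for all sufficiently small $\epsilon>0$; this is the technical core. One computes $\nabla^2q_0(P_i)=\tilde q''(x_i)\,\mathrm{diag}\!\bigl(1,\tfrac{2\tau^2}{3}\bigr)$, which is negative definite, and checks that $\nabla^2q_0(G_\pm)$ has negative diagonal entries and positive determinant (a positive combination of products of the negative numbers $\tilde q''(-\Delta x),\tilde q''(0),\tilde q''(\Delta x)$), hence is negative definite as well. So there are a fixed radius $\rho>0$ and a threshold $\epsilon_0>0$ such that for $0<\epsilon\le\epsilon_0$: on each $B(P_i,\rho)$ the Hessian of $q_\epsilon$ stays negative definite and $\nabla q_\epsilon(P_i)=0$, so $q_\epsilon<q_\epsilon(P_i)=1$ off $P_i$; on each $B(G_\pm,\rho)$ one has $q_0\le 1-c\lvert\,\cdot-G_\pm\rvert^2$ while $\epsilon R\le-\epsilon/6$, so $q_\epsilon<1$; and on the compact complement of these balls $q_0\le 1-\delta$ for some $\delta>0$ (there $q_0<1$, and the coincidence set is exactly $\{P_i\}\cup\{G_\pm\}$), so $q_\epsilon\le 1-\delta+\epsilon\norm{R}_\infty<1$ as soon as $\epsilon<\delta/\norm{R}_\infty$; the bound $q_\epsilon>-1$ is obtained identically from $q_0>-1$ on the compact set $\Omega$. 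Then $q_\epsilon$ obeys \eqref{eq:cond}, so it is a dual certificate, and \Cref{prop:dual} concludes.

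Two remarks on the mechanics. The hypothesis $f_c\ge 128$ enters only through \Cref{rem:thm}(a), to license the precise separation constant $1.87$ (and the non-degeneracy of the maxima of $\tilde q$ used above); the auxiliary polynomial $e$ of degree $\le f_c$ exists for any $f_c\ge 2$. The step I expect to be the main obstacle is the local analysis at the particles: one must guarantee that the perturbation $\epsilon R$ does not lift $q_\epsilon$ above $1$ in a neighbourhood of $P_i$, and this is precisely what is bought by $R(P_i)=0$, $\nabla R(P_i)=0$, and the non-degenerate maxima $\tilde q''(x_i)<0$; a second point not to be glossed over is that $G_\pm$ are the only ghosts of the configuration lying in $\Omega$, which hinges on $\Omega$ being cut out by genuine (non-periodic) constraints on the observation window.
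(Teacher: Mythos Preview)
Your proof is correct and follows the same overall strategy as the paper: dispose of non-constant $\eta$ via the unperturbed static average certificate, then for $\eta=(1,1,1)$ perturb the static average so as to strictly lower its value at the two ghost points $G_\pm$, and conclude by a local Hessian / global compactness argument. The one substantive difference is in how the perturbation is built. The paper keeps the Cand\`es--Fernandez--Granda interpolation machinery and perturbs the \emph{data}: it sets $\gamma_{1,\pm1}=\gamma_{3,\pm1}=1-\epsilon$, $\gamma_{1,0}=\gamma_{3,0}=1+2\epsilon$, $\gamma_{2,k}=1$, and re-solves the CFG linear system at each time step, relying on the fact that this system automatically enforces $\tilde q_k'(x_i)=0$ and depends affinely on the right-hand side. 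You instead fix the single CFG certificate $\tilde q$ once and add an explicit correction $\epsilon R$ assembled from an auxiliary interpolant $e$, with the coefficient pattern $(\tfrac16,-\tfrac13,\tfrac16)$ engineered so that $R$ and $\nabla R$ vanish at the particles regardless of $e'$. Your route is a bit more self-contained (one does not need to re-invoke the CFG construction or its invertibility for perturbed data) and makes the Hessian analysis at $P_i$ and $G_\pm$ fully explicit; the paper's route has the advantage that the derivative conditions at the $x_i$ come for free from the interpolation scheme and generalises more transparently to other $\gamma$-patterns (as hinted in the discussion after the proof about ``sinks of mass''). Either way the mechanism is the same: shift mass between time layers so that the average at the ghosts drops strictly below $1$, while preserving the value and the strict local maxima at the particles.
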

    \begin{proof}
        If $\eta$ is not a constant vector, the static average certificate (whose existence is guaranteed by \Cref{rem:thm}a) given by  \eqref{eq:average} is a valid dual certificate, and the result is trivial. Without loss of generality, suppose now that $\eta=(1,1,1)$. Set $x_{\unaryminus 1}= -\Delta x$, $x_0=0$ and $x_1=\Delta x$.

        We construct the dual certificate $q(x,v)$ using the notation of $\S$\ref{sub:dual} with $\calK=\{-1,0,1\}$. More precisely, we write
        \[
            q(x,v)=\frac13\sum_{k=-1}^1 q_k(x,v) 
        = \frac13\sum_{k=-1}^1 \tilde q_k(x+k\tau v),
    \]
        where the functions $\tilde q_k$ are constructed as follows.
        The quantities $\gamma_{i,k}$ defined in \eqref{eq:gamma} need to satisfy \eqref{prop:gammas}, but also to keep the absolute value of $q$ below 1 in the ghost particles. In view of the geometrical configuration (see \Cref{fig:ghostb}), these conditions are:
        \begin{align*}
            \gamma_{i,\unaryminus 1} + \gamma_{i,0} + \gamma_{i,1} & = 3, \qquad i \in \{1,2,3\}, \\
            |\gamma_{1,1} + \gamma_{2,0} + \gamma_{3,\unaryminus 1}| & < 3, \\
            | \gamma_{1,\unaryminus 1} + \gamma_{2,0} + \gamma_{3,1}| & < 3.
        \end{align*}	
        To take a family of solutions, for $\epsilon\in(0,1)$ set
        \begin{align*}
            \gamma_{i,\unaryminus 1} = \gamma_{i,1} & = 1 - \epsilon, \quad i \in \{1,3\}, \\  \gamma_{i,0} & = 1 + 2\epsilon, \quad i \in \{1,3\}, \\
            \gamma_{2,\unaryminus 1} = \gamma_{2,0} = \gamma_{2,1} & = 1.
        \end{align*}
        For each $k\in\calK$, let $\tilde q_k(x)=\sum_{l=-f_c}^{f_c} c_{k,l}e^{2\pi i l x}$ be the low frequency trigonometric polynomial such that
        \[
            \tilde q_k(x_i) = \gamma_{i,k},\quad {\tilde q_k}'(x_i)=0,\qquad i=-1,0,1,
        \]
        constructed in \cite{candes2014towards} by solving a linear system of equations, that is invertible when $\Delta x\ge \frac{1.87}{f_c}$, with $\gamma_{i,k}$ appearing in the right hand side.

        The final step is to ensure that $q(x,v) = \sum q_k(x,v)$ satisfies $|q(x,v)|<1$ for every $(x,v)\in \Omega$ that is not a particle.  When $\epsilon = 0$, $\tilde q_k$ is strictly concave in $x_i$ for every $i$ \cite{candes2014towards}. Furthermore, since the quantities $\gamma_{i,k}$ are affine in $\epsilon$, the map $\epsilon\mapsto \tilde q_k$ is affine as a function from $\R$ to $C^\infty(\Omega)$, hence every derivative of $\tilde q_k$ is continuous in $\epsilon$. Therefore, since $\tilde q_k'(x_i)=0$, the local concavity of $\tilde q_k$  is preserved for $\epsilon$ small and, consequently, the local maxima of $\tilde q_k$ are the interpolation points $x_i$. This is enough to prove that $q(x,v)$ attains local  maxima in the particles $(x_i,0)$ and in the ghost particles $(0,\Delta x)$ and $(0,-\Delta x)$. Finally, by continuity of $q(x,v)$ with respect to $\epsilon$, there exists an $\epsilon$ sufficiently small such that $|q(x,v)| < 1$ for every element in $\Omega$ that is not a particle.
    \end{proof}

    The methodology presented in the proof of \Cref{prop:removeGhost} can be applied to more general examples and it can be iterated to deal with the ghost particles one by one. The only requirement is to have a direction in which there are no ghost particles, so it can be used as a ``sink of mass''. In the  example considered, these directions are described by $L_{1,0}$, $L_{3,0}$, $L_{2,\unaryminus 1}$ and $L_{2,1}$.

    \section{ Stable reconstruction with noise }\label{sec:stability}
    Following the analysis done in \cite{candes2014towards} for the static case, we present a stability result for the dynamical problem. We will review their setting and adapt it to our case under the same hypotheses: one dimensional case ($d=1$), discrete setting and a specific type of noise model with bounded  total variation norm. Even though more recent and finer methods would allow for a more general investigation and in the continuous setting  \cite{fernandez2013support,candes2013,spike-acha-2015}, we decided to focus on this simplified case to highlight the role of ghost particles, which affect the stability of the problem, albeit they almost surely do not affect uniqueness in the noiseless case (Proposition~\ref{prop:ghost}). Hence, we do not aim at providing a complete and thorough analysis, but only at underlying this aspect.

Let us consider a discrete grid $\OmGrid$ with space width $\Delta_x>0$ and velocity width $\Delta_v > 0 $, namely
    \begin{equation*}
        \OmGrid = \left( \left(  \Delta_x \cdot \Z \right) \times \left( \Delta_v \cdot \Z \right) \right) \cap \Omega,
    \end{equation*}
    and we will assume throughout this subsection that our particles are located on the grid, i.e.\ $\T \subseteq \OmGrid$ and $\omega \in \calM\left( \OmGrid \right)$. Since we are dealing with a discrete space, we will denote the total variation norm by the $\ell^1$ norm in $\mathbb{K}^{|\Omega_\#|}=\calM\left( \OmGrid \right)$, as they are equivalent. We also recall  from \cite{candes2014towards} the super-resolution factor in space:
    \begin{equation*} 
        SRF_x = \frac{1}{\Delta_x f_c},
    \end{equation*} 
    which can be understood as the ratio between the desired space resolution and the permitted resolution given by the diffraction limit.

    We consider, instead of  \eqref{eq:signalModel}, the following input noise model: \begin{equation*}
        y = \mathcal{G} (\omega + z), \quad \lVert P_{\mathcal{G}} z \rVert_{1} \leq \delta,
    \end{equation*}
    where $z \in \calM\left(\OmGrid\right)$ and $P_{\mathcal{G}} = \Delta_x \mathcal{G}^* \mathcal{G}$ stands for the projection over the frequencies described by \eqref{eq:butterflySet}. The reason of this projection is that all the other frequencies are filtered out by the measurement process of $\mathcal{G}$. We consider a relaxed version of the noiseless problem \eqref{eq:lasso_v}
    \begin{equation} \label{eq:minnoise}
        \min_{\lambda \in \mathcal{M}\left( \OmGrid \right)}\norm{\lambda}_{1} \quad \text{subject to } \;\lVert \mathcal{G}\lambda - y \rVert_1 \leq \frac{ \delta }{\Delta_x \lVert \mathcal{G}^* \rVert_{\ell^1 \to \mathcal{M}}} ,
    \end{equation}
    where $\lVert \mathcal{G}^* \rVert_{\ell^1 \to \mathcal{M}}$ is the operator norm of $\mathcal{G}^*$.

By strengthening the hypotheses of \Cref{thm:main}, we obtain the following stability result.

    \begin{thm} Let  $f_c\ge 128$, $d=1$ and $\Delta_x,\Delta_v>0$ be such that
        \begin{equation}\label{eq:relation}
            \Delta_x^2 \le \frac{K(K+1)}{3}\tau^2 \Delta_v^2.
        \end{equation}
        Let $T=\{(x_i,v_i)\}_{i=1,\dots,N}\subseteq\Omega_\#$ be  a configuration of $N$ particles,  $w \in \K^N$ and  $\calK=\{-K,\dots,K\}$. Let
        $
        \omega = \sum_{i=1}^N  w_i\delta_{(x_i,v_i)}\in\mathcal{M}(\Omega)
        $
        be the unknown measure to be recovered.

        Suppose that
         \begin{enumerate}[(i)]
             \item  for every $k\in\calK$, the minimum separation condition \eqref{eq:sep} holds;
             \item and  for all $(x,v)\in\Omega \setminus \bigcup_{i=1}^N B_i$ we have
                 \begin{equation} \label{def:dynamicalStability}
                     \frac{1}{2K+1} \sum_{k=-K}^{K} \min \Bigl\{\min_{i \in \{1,\ldots,N\} } |x_i - x + k \tau (v_i - v) |^2,\, \frac{0.1649^2}{f_c^2}\Bigr\} \geq \Delta_x^2,
                 \end{equation}
                 where $B_i = \left\{(x,v) \in \Omega: |x-x_i| + K \tau | v - v_i | < 0.1649/f_c  \right\}$ are neighborhoods of each particle in $\T$.	
         \end{enumerate}
         Let $\hat\omega$ be a minimizer of \eqref{eq:minnoise}. Then we have the following stability bound for the error:
        \begin{equation*}
            \norm{\hat \omega  - \omega}_1 \leq C (\textnormal{SRF}_x)^2 \delta
        \end{equation*} 
        for some absolute constant $C>0$.
    \end{thm}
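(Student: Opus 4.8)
The plan is to adapt the dual-certificate stability argument of \cite{candes2014towards} to the phase space, building the dynamical certificate as a \emph{static average} in the sense of \Cref{def:staticAverage} of the static certificates attached to the individual time slices, and then reading off its vanishing profile. As in the static theory, the factor $(\textnormal{SRF}_x)^{2}$ will come from inverting the $f_c^{2}\Delta_x^{2}$ rate at which this certificate separates from $1$; hypotheses (i), (ii) and the relation \eqref{eq:relation} are precisely what makes the averaged certificate bounded away from $1$ by $\gtrsim f_c^{2}\Delta_x^{2}$ at \emph{every} grid point other than the particles.

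First I would construct, for each sign pattern $u\in\sgn\K^{N}$, a dynamical dual certificate with a controlled vanishing profile. By hypothesis (i) and the construction of \cite{candes2014towards} (valid because $f_c\ge 128$ and \eqref{eq:sep} holds at every $k\in\calK$), for each $k$ there is a low-frequency trigonometric polynomial $\tilde q_{k,u}$ with $\tilde q_{k,u}(x_i+k\tau v_i)=u_i$, $\tilde q_{k,u}'(x_i+k\tau v_i)=0$, $|\tilde q_{k,u}|\le 1$ on $[0,1]$, and the explicit estimate $|\tilde q_{k,u}(y)|\le 1-C_a\min\{f_c^{2}\,\mathrm{dist}(y,\{x_j+k\tau v_j\}_j)^{2},\,0.1649^{2}\}$ for an absolute constant $C_a>0$. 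Setting $q_u(x,v)=\frac{1}{2K+1}\sum_{k=-K}^{K}\tilde q_{k,u}(x+k\tau v)$ produces a function of dynamical form with $q_u(x_i,v_i)=u_i$, $|q_u|\le 1$ on $\Omega$, and, by the triangle inequality,
\[
1-|q_u(x,v)|\ \ge\ \frac{C_a f_c^{2}}{2K+1}\sum_{k=-K}^{K}\min\Bigl\{\min_{j}|(x-x_j)+k\tau(v-v_j)|^{2},\ \tfrac{0.1649^{2}}{f_c^{2}}\Bigr\}.
\]
On each $B_i$, the separation \eqref{eq:sep} together with the radius $0.1649/f_c$ of $B_i$ forces particle $i$ to be the closest one at \emph{every} time $k$ (its position at time $k$ is within $0.1649/f_c$, whereas any other particle is at least $(C_d-0.1649)/f_c>0.1649/f_c$ away), so the truncation is inactive; using $\sum_{k=-K}^{K}k^{2}=\tfrac{K(K+1)(2K+1)}{3}$, the right-hand side collapses to $C_a f_c^{2}\bigl[(x-x_i)^{2}+\tfrac{K(K+1)}{3}\tau^{2}(v-v_i)^{2}\bigr]$ --- i.e.\ $q_u$ vanishes quadratically in the full phase-space variable, with the velocity weighted by $\tfrac{K(K+1)}{3}\tau^{2}$.

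Second I would deduce that $1-|q_u(p)|\ge C_a f_c^{2}\Delta_x^{2}=C_a/(\textnormal{SRF}_x)^{2}$ for every grid point $p\in\OmGrid\setminus\T$. If $p\notin\bigcup_{i}B_i$, this is exactly hypothesis \eqref{def:dynamicalStability}. If $p=(x,v)\in B_i\setminus\T$, the quadratic bound above gives $1-|q_u(p)|\ge C_a f_c^{2}\bigl[(x-x_i)^{2}+\tfrac{K(K+1)}{3}\tau^{2}(v-v_i)^{2}\bigr]$, and since $p$ lies on the grid with $p\ne(x_i,v_i)$ we have $(x-x_i)^{2}\ge\Delta_x^{2}$ or $(v-v_i)^{2}\ge\Delta_v^{2}$; in the second case \eqref{eq:relation} yields $\tfrac{K(K+1)}{3}\tau^{2}(v-v_i)^{2}\ge\Delta_x^{2}$, so the bracket is $\ge\Delta_x^{2}$ in either case. (The $B_i$ are pairwise disjoint because $2\cdot 0.1649<C_d$, and \eqref{def:dynamicalStability} in particular forbids ghost particles outside $\bigcup_i B_i$ --- this is the mechanism by which ghost points enter the stability estimate.)

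Finally I would run the error-propagation argument of \cite{candes2014towards}. Put $h=\hat\omega-\omega$; since $\omega$ is feasible for \eqref{eq:minnoise} (by the choice of the constraint level, as in \cite{candes2014towards}), $\norm{\hat\omega}_1\le\norm{\omega}_1$. Writing $q_u=\mathcal{G}^{*}c_u$, pairing $\omega+h$ against the certificate for $u=(\sgn w_i)_i$ and using $|q_u|\le 1$ together with $q_u(x_i,v_i)=\sgn(w_i)$ gives
\[
\norm{\omega}_1\ \ge\ \norm{\omega+h}_1\ \ge\ \norm{\omega}_1+\mathrm{Re}\langle c_u,\mathcal{G}h\rangle+\sum_{p\notin\T}|h(p)|\bigl(1-|q_u(p)|\bigr),
\]
whence $\sum_{p\notin\T}|h(p)|\bigl(1-|q_u(p)|\bigr)\le|\langle c_u,\mathcal{G}h\rangle|\lesssim\delta$ --- controlling $\norm{\mathcal{G}h}_1$ by the constraint and $\norm{c_u}_\infty$ by the conditioning of the interpolation system, with the same bookkeeping as in \cite{candes2014towards} --- and the second step then yields $\norm{h|_{\OmGrid\setminus\T}}_1\lesssim(\textnormal{SRF}_x)^{2}\delta$. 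Repeating the pairing with the certificate for $u=(\sgn h_i)_i$ (available by hypothesis (i)) bounds the mass at the particles, $\sum_i|h(x_i,v_i)|\le|\langle c_u,\mathcal{G}h\rangle|+\norm{h|_{\OmGrid\setminus\T}}_1\lesssim(\textnormal{SRF}_x)^{2}\delta$, and adding the two pieces gives $\norm{h}_1\le C(\textnormal{SRF}_x)^{2}\delta$. The main obstacle is the first step --- invoking the static certificate of \cite{candes2014towards} with its explicit quadratic profile and the constants $0.1649$ and $C_d$ (hence the hypotheses $f_c\ge 128$ and \eqref{eq:sep}), and checking that the static-average construction preserves the interpolation data while producing the clean rate above, the cancellation $\tfrac{1}{2K+1}\sum_{k=-K}^{K}k^{2}=\tfrac{K(K+1)}{3}$ being exactly what makes \eqref{eq:relation} the right compatibility condition between $\Delta_x$, $\Delta_v$ and $K\tau$; the operator-norm bookkeeping (feasibility of $\omega$, the bound on $\langle c_u,\mathcal{G}h\rangle$, the conditioning of $\mathcal{G}$ on $\T$) is routine and as in \cite{candes2014towards}.
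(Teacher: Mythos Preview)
Your proposal is correct and follows essentially the same route as the paper: build the static-average certificate from the Cand\`es--Fernandez-Granda interpolants at each time slice, show $1-|q|\ge C_a f_c^2\Delta_x^2$ on $\OmGrid\setminus\T$ by the same two-case split (inside $B_i$ via the quadratic profile, the cancellation $\sum_k k=0$, the identity $\frac{1}{2K+1}\sum_k k^2=\frac{K(K+1)}{3}$ and \eqref{eq:relation}; outside $\bigcup_i B_i$ directly from \eqref{def:dynamicalStability}), and conclude. The only difference is cosmetic: the paper invokes \cite[Theorem~1.5]{candes2014towards} as a black box for the inequality $\norm{\hat\omega-\omega}_1\le 4\delta/(1-\|P_{\T^c}q\|_\infty)$, whereas you sketch the underlying pairing argument yourself.
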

        \begin{rem}
            The stability condition \eqref{def:dynamicalStability} can be understood as an extension of a condition to prevent ghost particles. As we can notice, if $(x,v)$ is a ghost particle, the sum in the left-hand side values 0. 
        \end{rem}

    \begin{proof}
        Take $\eta \in \K^N$ with $|\eta_i| = 1$. Let  $q(x,v)$ be the static average dual certificate given in \eqref{eq:average}, where the each static dual certificate $\tilde q_k$ is constructed as in  \cite{candes2014towards} for every $k\in\calK$, thanks to assumption (i). 
        With an abuse of notation, set $q = q_{i,j} = q\left(i \Delta_x , j \Delta_v \right)$. Let $P_T$ denote the projection onto the space of vectors supported on $T$, namely, $(P_T q)_{i,j} = q_{i,j}$ if $(i \Delta_x, j \Delta_v) \in T$ and $0$ otherwise.

        In view of \cite[Theorem~1.5]{candes2014towards}, using  the same proof we have the following stability bound for the error:
        \begin{equation} \label{eq:stabilityBound}
            \norm{\hat \omega - \omega}_1 \leq \frac{4 \delta}{1-\norm{P_{\T^c} q }_{\infty}}.
        \end{equation}
        It remains to bound  $\norm{P_{\T^c} q }_{\infty}$ in our discrete grid, 	
        namely the values of $q_{i,j}$ outside the set of particles $\T$. In order to do so, we shall use the following estimates from \cite[Lemma~2.5]{candes2014towards}:
        \begin{equation} \label{eq:lemma2-5}
            |\tilde q_k(t)| \leq 1 - C_1 f_c^2(t-(x_i + k \tau v_i))^2, \quad \textnormal{ when } \quad |t - (x_i+k \tau v_i)| \leq C_2/f_c,
        \end{equation}
        and 
        \begin{equation}\label{eq:lemma2-5b}
            |\tilde q_k(t) | \leq 1 - C_1 C_2^2, \quad \text{ when } \min_{(x_i,v_i)\in\T}|t-(x_i+k \tau v_i)|> C_2/f_c,
        \end{equation}
        where $C_1 = 0.3353$ and $C_2 = 0.1649$.

        Take $(\xGrid, \vGrid) \in \OmGrid \setminus T$, we want to bound the maximum of $\left\lvert q(\xGrid,\vGrid)\right\rvert$. We deal with two cases: when $(\xGrid,\vGrid)$ is close to some particle, i.e.\ $(\xGrid,\vGrid) \in B_i$ for some $i$, or when it is not.

        Fix $i$ such that $(\xGrid,\vGrid) \in B_i$, then we can write $(\xGrid,\vGrid) = (x_i + n_x \Delta_x,v_i + n_v \Delta_v)$ with $(n_x,n_v) \in \Z\times\Z \setminus \{(0,0)\}$.  From the definition of $B_i$, we have that we can use estimate \eqref{eq:lemma2-5} for every $k \in \calK$. Therefore
        \begin{align*}
            |q(\xGrid, \vGrid) | & = \frac{1}{2K+1}\left\lvert \sum_{k=-K}^K \tilde q_k\big( x_i + k \tau v_i + (n_x\Delta_x + k \tau n_v \Delta_v) \big)\right\rvert \\
            & \leq \frac{1}{2K+1} \sum_{k=-K}^K \left( 1 - C_1 f_c^2(n_x \Delta_x + k \tau n_v \Delta_v)^2 \right) \\
            & = \frac{1}{2K+1}  \sum_{k=-K}^K \left( 1 - C_1 f_c^2 \left( n_x^2 \Delta_x^2 + 2k \tau n_x n_v \Delta_x \Delta_v  + k^2 \tau^2 n_v^2 \Delta_v^2 \right) \right) \\
            & = 1 - \frac{C_1 f_c^2}{2K+1} \left(\left( (2K+1) n_x^2\Delta_x^2  + \frac{K (K+1)(2K+1)}{3} \tau^2 n_v^2\Delta_v^2 \right) \right) \\
            & \le 1 - C_1 f_c^2 \Delta_x^2 (n_x^2 + n_v^2)\\
            & \le 1 - C_1 f_c^2 \Delta_x^2 , 
        \end{align*}
        where we used \eqref{eq:relation} and that $n_x^2 + n_v^2\ge 1$.

        In the case where $(\xGrid,\vGrid) \not \in B_i$ for every $i$, we take for each $k\in \calK, i_k = \argmin_{i \in \{1,\ldots,N\}} |x_i -\xGrid + k\tau( v_i-\vGrid)|$ and $\calI_k = x_{i_k} - \xGrid + k \tau (v_{i_k} - \vGrid)$. Hence, by \eqref{eq:lemma2-5b} we have
        \begin{align*}
            |q(\xGrid&, \vGrid)|  = \frac{1}{2K+1} \left| \sum_{k = -K}^K \tilde q_k( x_{i_k} + k \tau v_{i_k} - \calI_k)\right| \\
            & \leq 1 - \frac{C_1 f_c^2}{2K+1} \sum_{k = -K}^K \left(\calI_k^2\, \mathds{1}_{(-\infty,0]}\Bigl(|\calI_k|-   \frac{C_2}{f_c}\Bigr) + \frac{C_2^2}{f_c^2}\,\mathds{1}_{(0,+\infty)}\Bigl(|\calI_k|-  \frac{C_2}{f_c}\Bigr) \right)\\
            & \leq 1 - \frac{C_1 f_c^2}{2K+1} \sum_{k = -K}^{K} \min\bigl( \calI_k^2, C_2^2/f_c^2\bigr) \\
            & \leq 1 - C_1 f_c^2 \Delta_x^2,
        \end{align*}
        where the last inequality is precisely the stability condition \eqref{def:dynamicalStability}.

        Therefore we have that $\left\lVert P_{T^c} q \right\rVert_\infty \leq 1 - C_1 f_c^2 \Delta_x^2 \le 1 - C/\textnormal{SRF}_x^2$, for some absolute constant $C>0$. By inserting this bound into \eqref{eq:stabilityBound}, we conclude the proof.

    \end{proof}

\section{Numerical simulations}\label{sec:numerical}

\subsection{Methods}
Solving minimization problem \eqref{eq:lasso_v} in all its generality is not an easy task, since it is nonlinear and infinite dimensional. It is possible to use an analogue discrete problem, where the locations and velocities are fixed on a grid whose size determines the resolution we want to obtain. However, this methods becomes intractable for a fine resolution. 

In this work, we seek to validate our approach using a reconstruction method in the continuum. In~\cite{candes2014towards}, an algorithm with solutions in the continuum is presented in the one dimensional case, but we require a method for  higher dimensions. In~\cite{boyd2015alternating}, the authors develop an algorithm  to solve the following problem for any linear operator $\mathcal{F}$ from the space of positive punctual measures to $\mathbb{R}^n$:
\begin{equation}
    \label{eq:min-alt}
    \min_{\tilde{\mu}} \Vert \mathcal{F} \tilde{\mu} - Y \Vert_2^2 \quad \textrm{subject to } \Vert \tilde{\mu} \Vert_{TV} \leq M.
\end{equation}
Albeit the proposed algorithm is limited to positive weights, this is a realistic expectation in the case of many physical signals, for instance those generated by micro-bubbles in ultrafast ultrasound imaging. Clearly, a minimizer of \eqref{eq:min-alt} will be a minimizer of \eqref{eq:lasso_v} provided that the initial total variation is known.
\begin{lem}
    Assume that $\mu$ is the unique solution of \eqref{eq:lasso_v}. Then $\mu$ is the unique solution of \eqref{eq:min-alt} with $M = \lVert \mu \rVert_{TV}$.
\end{lem}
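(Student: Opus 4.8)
The plan is a short two-directional argument that leverages the uniqueness already assumed for \eqref{eq:lasso_v}. First I would observe that $\mu$ is feasible and, in fact, optimal for \eqref{eq:min-alt}. Writing $Y$ for the noiseless data $\mathcal{F}\mu$ (the datum appearing in the constraint of \eqref{eq:lasso_v}), the fact that $\mu$ solves \eqref{eq:lasso_v} gives $\mathcal{F}\mu = Y$, so the objective of \eqref{eq:min-alt} evaluated at $\mu$ is $\norm{\mathcal{F}\mu - Y}_2^2 = 0$; and $\norm{\mu}_{TV} = M$ by the choice of $M$, so $\mu$ lies in the feasible set. Since the objective $\tilde\mu\mapsto\norm{\mathcal{F}\tilde\mu - Y}_2^2$ is non-negative, its infimum over the feasible set equals $0$ and is attained at $\mu$; hence $\mu$ is a minimizer of \eqref{eq:min-alt}.

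For uniqueness, I would take an arbitrary minimizer $\nu$ of \eqref{eq:min-alt} with $M=\norm{\mu}_{TV}$ and show $\nu=\mu$. By the previous step the minimal value of \eqref{eq:min-alt} is $0$, so $\norm{\mathcal{F}\nu - Y}_2^2 = 0$, i.e.\ $\mathcal{F}\nu = Y$; thus $\nu$ is feasible for \eqref{eq:lasso_v}. The TV constraint in \eqref{eq:min-alt} moreover gives $\norm{\nu}_{TV}\le M=\norm{\mu}_{TV}$. Since $\mu$ minimizes the TV norm among all measures feasible for \eqref{eq:lasso_v}, this inequality cannot be strict, so $\norm{\nu}_{TV}=\norm{\mu}_{TV}$ and $\nu$ is itself a minimizer of \eqref{eq:lasso_v}. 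The assumed uniqueness of that minimizer then forces $\nu=\mu$, which is the claim.

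There is essentially no analytical obstacle here; the only points requiring care are bookkeeping ones. One must make explicit that the data $Y$ in \eqref{eq:min-alt} is exactly the noiseless measurement $\mathcal{F}\mu$, so that the least-squares residual genuinely vanishes at $\mu$ and $0$ is the global minimum; and one should note that the admissible class in \eqref{eq:min-alt} (punctual measures with the sign convention needed by the algorithm of \cite{boyd2015alternating}) is contained in $\mathcal{M}(\Omega)$, so that feasibility for \eqref{eq:min-alt} indeed implies feasibility for \eqref{eq:lasso_v}. With these observations the two displayed steps complete the argument.
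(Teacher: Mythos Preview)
Your proposal is correct and follows essentially the same approach as the paper's proof: both argue that $\mu$ is feasible with zero objective in \eqref{eq:min-alt}, and that any other feasible point with zero objective would be a second minimizer of \eqref{eq:lasso_v}, contradicting uniqueness. Your version is simply more explicit about the two directions and the bookkeeping, whereas the paper compresses the argument into two sentences.
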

\begin{proof}
    Since \eqref{eq:lasso_v} admits a unique minimizer, every $\tilde{\mu} \neq \mu$ such that $\mathcal{F} \tilde{\mu} = Y$ verifies $\Vert {\tilde{\mu }} \Vert_{TV} > M$. Therefore, $\tilde{\mu}$ is the unique minimizer of \eqref{eq:min-alt}.
\end{proof}

The codes of the simulations of this paper are available at \url{https://github.com/panchoop/dynamic_spike_super_resolution}.
\subsection{The measurements}

We consider  the perfect low-pass filter described in \Cref{subsec:lowFreq} with $d=1$ as forward measurement operator, where the measured Fourier frequencies are $\{-f_c, \dots, f_c \}$ for some $f_c \in\N$ and the number of time samples are $2K+1$ with sampling rate $\tau$. The considered parameters for the simulations are:
\begin{equation}
    f_c = 20, \quad
    K = 2, \quad  
    \tau = 0.5.
    \label{eq:params}
\end{equation} 
Each simulation contains a random number of particles, between 4 and 10. Furthermore each particle is generated uniformly in $\Omega$ and has an associated random weight taken uniformly between $0.9$ and $1.1$. In total, we made $18,000$ simulations.

In order to validate our dynamic spike super-resolution approach, we compare  dynamical and static reconstructions and study the stability of our setting. Dynamical reconstruction refers to the recovery of positions and velocities in the space-velocity domain $\Omega \subset \R^2$, whereas by static reconstruction we mean  the recovery of the positions of the particles in $[0,1]$  at some fixed time.

Since we are simulating and reconstructing particles' locations and velocities in the continuum, there are natural associated numerical errors and we require a criterion to call a reconstruction either a success or a failure. Let $\Delta_x,\Delta_v, \Delta_w > 0$ be the maximal accepted errors in the reconstruction of position, velocity and weight, respectively. More precisely, if we consider a single particle $w_i \delta_{(x_i,v_i)}$ and its associated reconstruction $\tilde w_i \delta_{(\tilde x_i, \tilde v_i)}$, then we say that the particle was successfully reconstructed if 
\begin{equation*}
    \lvert x_i - \tilde x_i \rvert \leq \Delta_x, \quad \lvert v_i - \tilde v_i \rvert \leq \Delta_v, \quad \lvert w_i - \tilde w_i \rvert \leq \Delta _w.
\end{equation*}
We consider a configuration successfully reconstructed if, for each particle, there exists a unique successfully reconstructed particle and there are no additional reconstructed particles. These criteria are analogue for static reconstructions.

Notice that in the case of space and velocity, the error values scale with respect to the maximal imaging frequency ($f_c$ in space and $K f_c \tau$ in velocity), thus it is natural to consider the super-resolution factors
    \begin{equation*}
        \text{SRF}_x = \frac{1}{f_c} \frac{1}{\Delta_x} \qquad \text{and} \qquad \text{SRF}_v = \frac{1}{f_c K \tau} \frac{1}{\Delta_v},
    \end{equation*}
which will be set appropriately in each simulation.

Let us introduce the following measure of separation of a configuration of particles $T=\{(x_i,v_i)\}_i\subseteq \Omega$:
\begin{equation} \label{eq:dynsep}
    \Delta_{dyn}(T) = \max^3_{k \in\{-K,\dots,K\}} \min_{i\neq j} \lvert x_i - x_j + \tau k ( v_i - v_j) \rvert,
\end{equation}
where $\displaystyle \max^3$ is the third highest element of the set. The quantity $\Delta_{dyn}$ represents condition (1) of \Cref{thm:main}, for any subset of measurements $\calK \subseteq \{-K,\ldots,K\}$ with $\lvert \calK \rvert = 3$. We will evaluate the simulated reconstructions against this measure of separation, which will be scaled by $\frac{1}{f_c}$, as the theoretical allowed resolution depends on this value (see \Cref{rem:thm}a). Condition (2)  of \Cref{thm:main} on the absence of ghost particles could also be considered, but a formula for that purpose is extremely complicated and in view of \Cref{prop:ghost} we believe it is not necessary to include it for the following analysis.
\subsection{Results}
\subsubsection{Comparison of dynamical and static reconstructions}
\label{sec:noiseless}
For a given configuration  of particles, we consider three reconstruction procedures.
\begin{itemize}
    \item The dynamical reconstruction: we take the whole data and recover both positions and velocities of all particles.
    \item The static reconstruction: we perform static reconstruction of the positions at each time step independently, and we call it a success if the reconstruction is successful for at least one time step. 
    \item The static 3 reconstruction: we perform static reconstruction of the positions at each time step independently, and we call it a success if the reconstruction is successful for at least three time steps. The rationale of this case is that with three successful static reconstructions, it would be possible to  use some tracking technique to reconstruct the velocities afterwards. This case also encodes the theoretical limit given by \Cref{thm:main}. 
\end{itemize} 

\begin{figure}
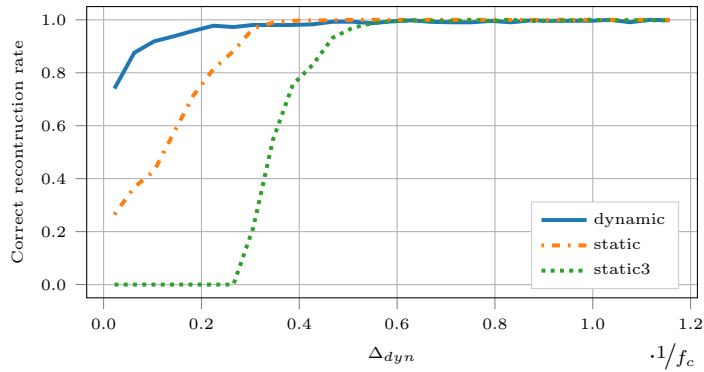

    \centering
    \InputImage{0.8\textwidth}{0.45\textwidth}{noiseless}
    \caption{Successful reconstruction rates for the cases described in \Cref{sec:noiseless}, with $\text{SRF}_x=\text{SRF}_v = 1000$ and $\Delta_w = 0.01$. In the horizontal axis  we consider the measure of separation \eqref{eq:dynsep} scaled by $\nicefrac{1}{f_c}$.}
    \label{fig:noiseless}
\end{figure}

In \Cref{fig:noiseless} we present the rates of successful reconstructions for each of the three cases for randomly generated particles. We observe that the dynamical reconstruction has a much higher reconstruction rate than the static reconstructions for small values of $\Delta_{dyn}$, namely for configurations of particles that are never well-separated (this does not contradict \Cref{rem:thm}c, since stability deteriorates for close particles \cite{denoyelle2016support}, and so we face numerical errors even in the absence of noise). This gives a big advantage of dynamical  over static reconstructions, and shows that the assumptions of \Cref{thm:main} are in fact too strict: in practice, successful recovery happens much more often than the current theory predicts. For larger values of $\Delta_{dyn}$, the dynamical  reconstruction rate remains slightly below 1: this could be  explained by the presence of ghost particles, or by numerical issues of the minimization algorithm.

\subsubsection{Robustness to noise} \label{sec:noise}

We now study the stability of the dynamical reconstruction method and compare it to the stability for the static approaches. We consider a measurement noise model, given by a normally distributed noise scaled by a factor $\alpha \geq 0 $. More precisely, for a frequency $l$ and time sample $k$, our measurements are
\begin{equation*}
    \left( \mathcal{G}\omega \right)_{l,k} = \sum_{i=1}^N w_i e^{-i2\pi l (x_i + k \tau v_i)} + \alpha \left( \mathcal{N}_{l,k,1}  + i \mathcal{N}_{l,k,2} \right),
\end{equation*}
where $\mathcal{N}_{l,k,j}$ are independent standardized normal random variables. In  \Cref{fig:noise-dyn} we plot the reconstruction rate for different values of $\alpha$. To understand the noise level, we remind that $w_i \in (0.9, 1.1)$. The desired super-resolution factors and the weight threshold are larger in these experiments, since in the presence of noise we do not expect infinitely resolved reconstructions.

\begin{figure}
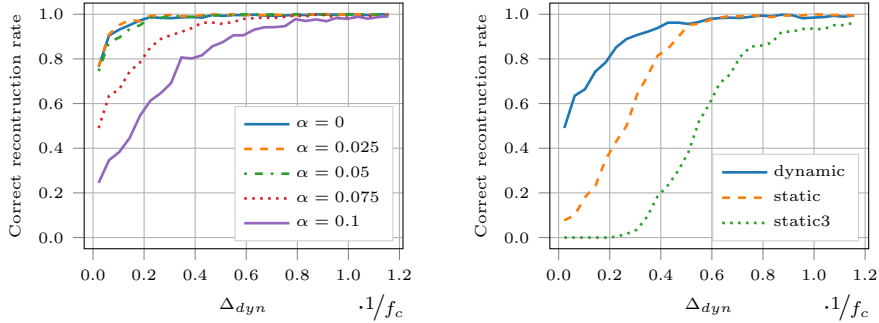

    \centering{}\captionsetup[subfigure]{width=150pt}
    \subfloat[Successful dynamical reconstruction rates under different intensities of noise.]{\label{fig:noise-dyn}
        \InputImage{0.48\textwidth}{0.4\textwidth}{noisecomp-dyn}
    }
    \hfill{}
    \subfloat[Successful reconstruction rates for the cases described in \Cref{sec:noiseless}, with a fixed noise level $\alpha = 0.075$.]{\label{fig:noise-comp}
        \InputImage{0.48\textwidth}{0.4\textwidth}{noise_comp_across}
    }
    \caption{Reconstruction rates for the measurement noise model described in \Cref{sec:noise}, with $\text{SRF}_x = \text{SRF}_v = 40$ and $\Delta_w = 0.05$.
    }
\end{figure}

We observe that the dynamical reconstruction method is stable to measurement noise, and this stability is similar to the one of the static reconstruction, as we can see in \Cref{fig:noise-comp}.

\subsubsection{Robustness to curvature of trajectory}\label{sub:curvature}

We study how the reconstruction algorithm fares when instead of a constant velocity, we consider a curved trajectory for the imaged particles. For this purpose we consider the following dynamics for a particle $\delta_{x_i,v_i}$:
\begin{equation*}
    x(t) = x_i + v_i t + \frac{a}{2} t^2 = x_i + v_i t (1 _+ \frac{a}{2 v_i}t).
\end{equation*}
We consider $\beta = \frac{a}{2 v_i}\tau K$ as measure of curvature.
In \Cref{fig:curvatureEX} we present one example of a considered curvature for a trajectory, and in  \Cref{fig:curvatureDyn} we present the dynamical reconstruction rates for different values of $\beta$.

\begin{figure}
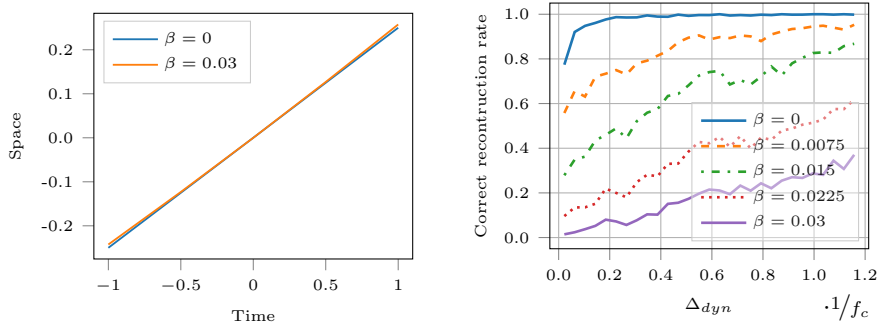

    \centering{}\captionsetup[subfigure]{width=150pt}
    \subfloat[Comparison of the trajectories of a particle with speed $0.25$ and  curvature $\beta = 0$ and $\beta = 0.03$.]{\label{fig:curvatureEX}
        \InputImage{0.48\textwidth}{0.4\textwidth}{curvature}
    }
    \hfill{}
    \subfloat[Successful dynamical reconstruction rates for different levels of curvature.]{\label{fig:curvatureDyn}
        \InputImage{0.48\textwidth}{0.4\textwidth}{curvcomp}
    }
    \caption{Reconstruction rates with the curvature model described in \Cref{sub:curvature}, with $\text{SRF}_x=\text{SRF}_v=1$ and $\Delta_w = 0.2$.
    }
\end{figure}

For the presented examples, we relaxed the super resolution factors to $1$, since the algorithm is reconstructing some position and velocity, but reasonably it is not located exactly in the target $(x_i,v_i)$; similarly, we relaxed the weight threshold condition. Nonetheless, we notice that the stability of the method with respect to changes in the  curvature of the trajectories is quite poor (even though probably sufficient in a realistic context, see $\S$\ref{sub:num}). This may be explained by a not optimal choice of the parameters of the minimization algorithm (which should take into account that the forward model is not exact) and by the difficulties for a linear model to capture nonlinear movements. In any case, higher order models are expected to solve this issue and represent an interesting direction for future research.

\section{Applications to ultrafast ultrasonography}\label{sec:ultrafast}
In this section, we describe a protocol to apply our method to the problem of super-resoluted imaging of blood vessels arising from ultrafast ultrasonography, as mentioned in the Introduction. The setting is the following: we have a sequence of images of a medium containing blood vessels, in which point reflectors are randomly activated during a small time interval. These reflectors are moving inside the blood vessels and their velocity is approximately the same as that of blood in the blood vessels.

We assume that we can filter out clutter signal coming from other sources than these reflectors. The recorded images are then convolutions of these point sources by the point spread function  (PSF). The PSF of 
 ultrafast ultrasound imaging was derived in \cite{alberti2017mathematical}, but filtering out clutter signal changes the shape of the PSF, which is here approximated by a Gaussian function. However, the analysis of \cite{alberti2017mathematical} allows for a precise derivation, which we leave for future investigation.

\subsection{Fully automated imaging protocol}

We must  make sure that the reconstruction algorithm works in this sequence, and that reflectors do not appear or disappear   in a chosen sequence. Given all these remarks, we propose the following imaging procedure, which fully automatically produces a super-resoluted image of blood vessels with velocities. It is composed of two steps:

\begin{figure}
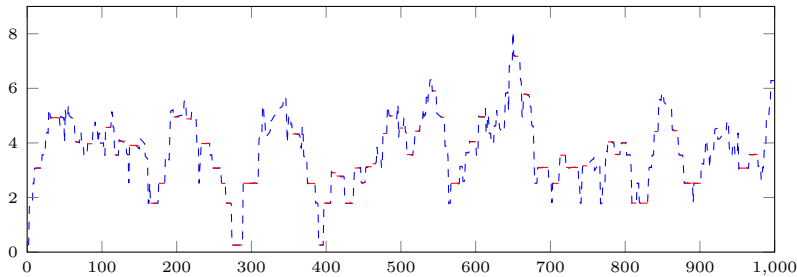

    \centering
    \InputImage{0.95\textwidth}{0.4\textwidth}{L2profile}
    \caption{Graph of the $\ell^2$ norm of the simulated measurements in each frame. \label{fig:norm}}
\end{figure}

\paragraph{Choosing reconstruction intervals.} In order to apply our dynamical reconstruction algorithm to this problem, we have to select consecutive frames during which the particles do not appear or vanish from one frame to another. One way to ensure this condition is to select time intervals during which the $\ell^2$ norm of the observations is constant. Particles appearing or vanishing make the $\ell^2$ norm jump, whereas close particles will make the $\ell^2$ norm vary slowly. \Cref{fig:norm} illustrates the variation of $\ell^2$ norm as a function of frame number (dashed line), and the selected intervals (red solid line), for the case presented in the numerical experiments below.

\paragraph{Reconstructing position and velocities.} We then propose to reconstruct positions and velocities using the algorithm presented in the previous sections, using the PSF of ultrafast ultrasound, in each of the intervals chosen in the previous step. By aggregating all positions obtained using this algorithm, we obtain a super-resoluted image of the blood vessels.

\subsection{Numerical experiments}
\label{sub:num}

\begin{figure}
	        \centering
	        \subfloat[Diagram describing the considered vessels with their respective blood flows.]{\includegraphics[width=0.39\textwidth]{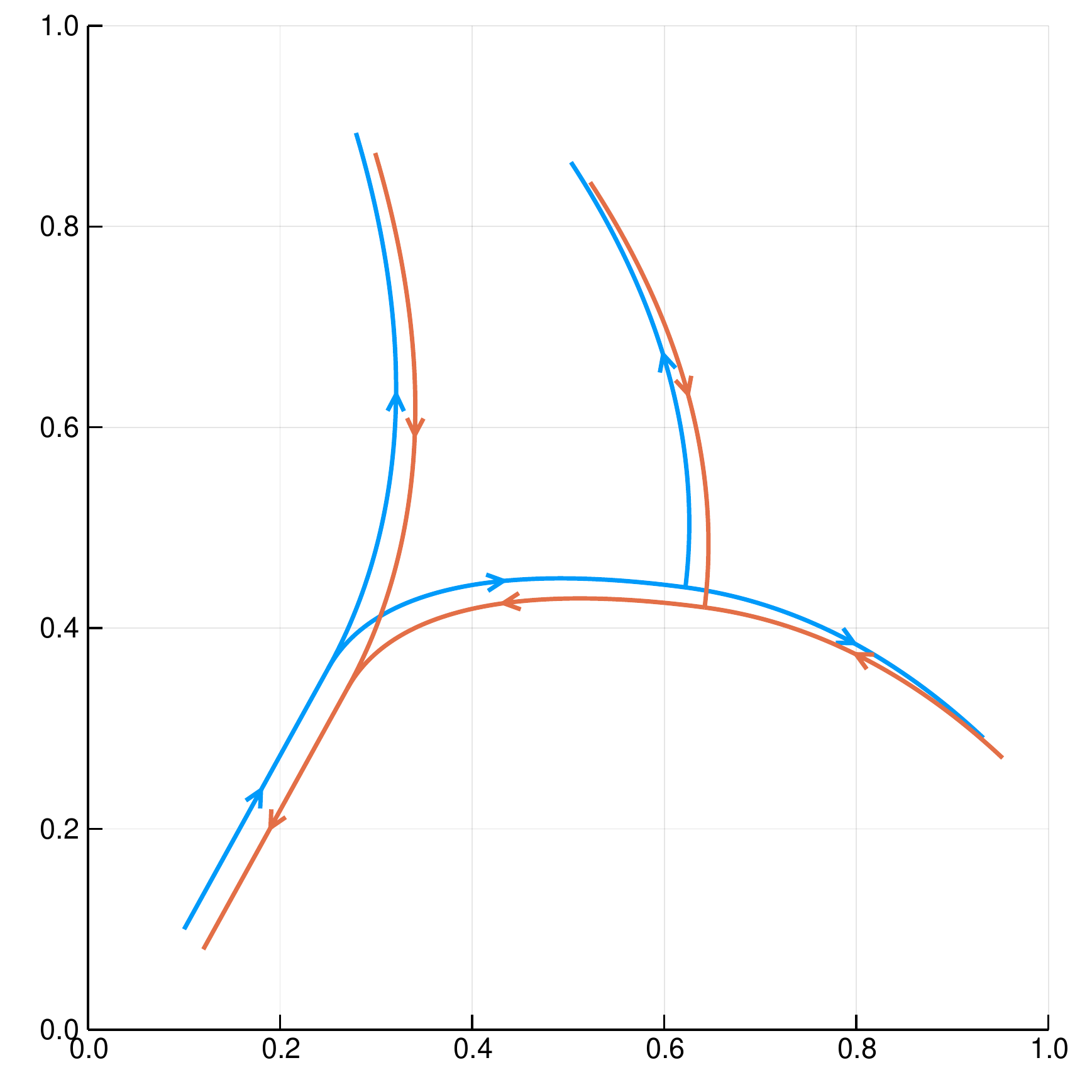}\label{fig:vessel_scheme}}
	        \hfill
	        \subfloat[B-mode image of the vessels, taken over 2 seconds of measurements.]{\InputImage{0.45\textwidth}{0.45\textwidth}{bmode}\label{fig:bmode}}
	        \caption{Simulated blood vessels}
	    \end{figure}

\subsubsection{Setting}
In order to test this procedure, we generate images using a toy example  in which we have two slightly separated curved blood vessels with two branches and opposite blood flows. In Figure \ref{fig:vessel_scheme} there is an illustration of the considered geometry. The velocities of the particles have constant modulus and the particles' weights are set to 1. The considered parameters are such that they are similar to the experimental ones:
\begin{itemize}[itemsep=1pt]
    \item Domain size: $\unit[1]{mm} \times \unit[1]{mm}$.
    \item Pixel size: $\unit[0.04]{mm} \times \unit[0.04]{mm}$.
    \item Maximal vessel separation: $ \sim \unit[0.03]{mm}$.
    \item Point spread function:$\ (x,y) \mapsto e^{-(x^2 + y^2)/2\sigma^2}, \ \ \sigma = 0.04.$
    \item Particles speed: $\unit[2]{mm/s}$.
    \item Sampling rate: $\tau = \unit[0.002]{s^{-1}}$
    \item Total acquisition time: $\unit[2]{s}$.
\end{itemize}

Since in ultrafast ultrasound microbubble imaging the particles are activated and deactivated randomly, we simulate this behavior in the following fashion. The activation  of a single particle is modeled as a Bernoulli random variable on each time sample, whereas the deactivation time is modeled as a Poisson random variable. Further, we include measurement noise as in \Cref{sec:noise}, with $\alpha = 0.01$. 

\subsubsection{Results}

\begin{figure}\centering
    % \begin{minipage}{0.017\textwidth} $ $ \end{minipage} 
            \subfloat{\adjustbox{trim=0.3cm}{
                \InputImage{0.4\textwidth}{0.4\textwidth}{singleframe1}
        }} \hfill
        \subfloat{\adjustbox{trim=0.3cm}{
            \InputImage{0.4\textwidth}{0.4\textwidth}{singleframe2}
        }} \hfill
        \subfloat{\adjustbox{trim=0.3cm}{
            \InputImage{0.4\textwidth}{0.4\textwidth}{singleframe3}
        }}  \\
   %     \begin{minipage}{0.8\textwidth}    $ $      \end{minipage}
        \caption{Three different measurements with ultrafast ultrasound of the simulated particles.}
        \label{fig:threeframes}
    \end{figure}
    
    \begin{figure} 
	    	\centering
		    \includegraphics[width=0.7\textwidth]{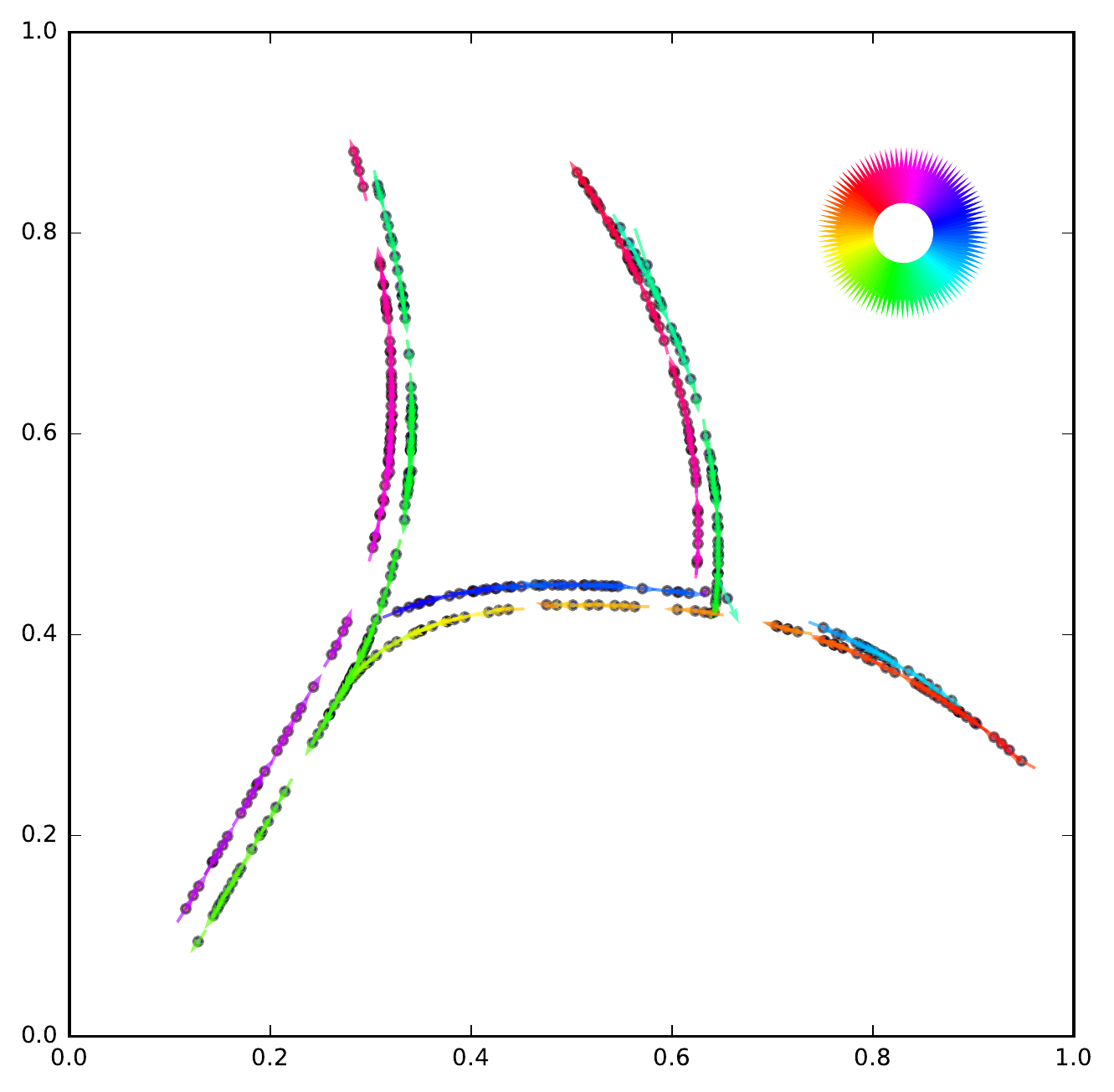}
	    	\caption{Super resolved reconstruction of the simulated particles,  with colored arrows representing the reconstructed velocities.}
	    	\label{fig:superres}
	    \end{figure}

In \Cref{fig:norm} we present the $\ell^2$ norms of the measurements at each time step. As an illustration, in \Cref{fig:threeframes} we show three frames of the simulated measurements. In \Cref{fig:bmode} we present the B-mode image, i.e.\ the average signal intensity over all the frames, which shows that the resolution does not allow for separation of the  vessels. \Cref{fig:superres} presents the reconstructed position and velocities  using the algorithm described in this work.  
Albeit a few errors are present,   positions and  velocities are reconstructed very accurately, even with particles that displace with non constant velocity.

    \section{Conclusion}\label{sec:conclusion}
    In this paper, we have introduced and studied a new framework for dynamical super-resolution imaging, which allows for super-resolved recovery of positions and velocities of particles from low-frequency measurements. The presented theoretical results  are validated by extensive simulations, related to low-frequency one-dimensional Fourier measurements and to two-dimensional ultrafast ultrasound localization microscopy. 
    
    In fact, the numerical experiments show that this approach works much better than what the current theory predicts, and so there is a need of further theoretical investigation. Indeed, the arguments presented in this paper are still based on the static reconstruction, while ideally one should consider the dynamical problem directly. Further, it would be nice to generalize this method to higher order models, in order to relax the assumption of linear trajectories.

    \bibliographystyle{plain}
    \bibliography{superres}
\end{document}